\newcommand{\cC}{{\mathcal C}} 
\newcommand{\cD}{{\mathcal D}}
\newcommand{\sign}{\mathbf{sign}}
\newcommand{\be}{\begin{equation}}
\newcommand{\ee}{\end{equation}}
\newcommand{\R}{\mathbb{R}}
\newcommand{\N}{\mathbb{N}}
\newcommand{\tx}{\mathring{x}}
\newcommand{\restr}[1]{\lower0.4ex\hbox{$\vert$}\lower0.7ex\hbox{ $\!_{#1}$ }}
\DeclareMathOperator{\diag}{\mathbin{\operatorname{diag}}}
\newtheorem{proposition}{Proposition}[section]
\newtheorem{corollary}[proposition]{Corollary}
\newtheorem{lemma}[proposition]{Lemma}
\newtheorem{definition}[proposition]{Definition}
\journal{Linear Algebra and its Applications}
\begin{document}

\title{  Solving piecewise linear equations        
 $\,$\\ in abs-normal form  
}

 \author{ Andreas Griewank, Jens-Uwe Bernt,  Manuel Radons  and Tom Streubel }
\address{ Department of Mathematics, Humboldt-Universit\"at zu  Berlin\\
 e-mail: surname@math.hu-berlin.de
\vspace*{-15pt}} 
\date{ Draft Version of \today}

\begin{keyword} 

Switching depth,  Sign real spectral radius, Coherent orientation, Generalized Jacobian, Semismooth Newton, Unfolded system, Linear complementarity \\ 
\MSC[2010] $90\mathrm C33$ \sep $90\mathrm C56$ \sep $49\mathrm J52$ \sep $65\mathrm F99$
\end{keyword}
\maketitle

\textbf{Abstract:}
{\em
With the ultimate goal of iteratively solving  piecewise smooth (PS) systems, we
consider the solution of  piecewise linear (PL) equations.
As shown in \cite{griewank2013stable} PL models can be derived in the fashion of
automatic or algorithmic differentiation as local approximations of PS functions with a
second order error in the distance to a given reference point. The
resulting PL functions are obtained  quite naturally in what we call the
abs-normal form, a variant of the state representation proposed by
Bokhoven in his dissertation \cite{van1981piecewise}.  Apart from the tradition
of PL modelling by electrical engineers, which dates back to the
Master thesis of Thomas Stern \cite{MR0086600} in 1956, we take into
account more recent results on linear complementarity problems and semi-smooth equations 
originating in the optimization community \cite{0757.90078,
scholtes2012introduction, pang1992linear}.  We analyze simultaneously the original PL problem (OPL)
in abs-normal form and a corresponding complementary system (CPL), which is closely related to 
the absolute value equation (AVE) studied by Mangasarian et al \cite{mama} and 
a corresponding linear complementarity problem (LCP). We show that the CPL, like KKT conditions and other
simply switched systems, cannot be open without being injective. Hence some of the intriguing PL structure described by Scholtes 
in
\cite{scholtes2012introduction} is lost in the transformation from OPL to CPL.  To both
problems one may apply  Newton variants with appropriate generalized
Jacobians directly computable from the abs-normal representation.
Alternatively, the CPL can be solved by Bokhoven's modulus method and
related  fixed point iterations.  We compile the properties of the
various schemes and highlight the connection to the properties of the
Schur complement matrix, in particular its signed real spectral radius as
analyzed by Rump in \cite{rump1997theorems}. Numerical experiments and suitable
combinations of the fixed point solvers and  stabilized generalized Newton
variants remain to be realized.   \em} 
\section{Introduction and Motivation}
In many applications one encounters piecewise smooth (PS) functions that can be approximated locally with second order error
by piecewise linear (PL) functions. 
In this paper we will assume throughout that all functions are continuous and thus, in fact, Lipschitz continuous. However, an extension  to piecewise linear but possibly discontinuous problems should be in the back of our  minds before we settle on data structures and interfaces. Discontinuous solution operators   may arise for example, if one considers least squares problems defined by piecewise linear systems of equations.       

The process of piecewise linearization of a piecewise smooth function $F: \cD \subset \R^n\mapsto \R^m$ given by an evaluation procedure was described in 
\cite{griewank2013stable}. The key assumption is that all nonsmoothness can be cast in terms of the absolute value function $|\cdot | $. Then piecewise linearization can be achieved in the style of algorithmic differentiation \cite{griewank2008evaluating} by simply replacing all smooth elemental functions by their tangent line or plane (in case of binary operations or special functions) and the absolute value function by itself. 

In contrast to conventional notions of differentiation one does not obtain a collection of derivative vectors or matrices at a given reference point $\tx$. 
Rather one arrives at a procedure for evaluating an incremental PL function 
$\Delta  F(\tx, \Delta x) : \cD \times \R^n\mapsto \R^m$  for which  
$$ F(\tx + \Delta x ) \; = \; F(\tx) + \Delta  F(\tx, \Delta x)  + O(\|\Delta x\|^2)  \; .$$  
Here the error term $\|\Delta x\|^2$ is uniform on compact subsets of $\cD - \tx$.  
This means that $\Delta  F(\tx, \Delta x)$ is a candidate for a nonsingular uniform Newton approximation in the sense of \cite{pang1992linear}, although the local homeomorphism property is by no means guaranteed.

Throughout this paper we will only be concerned  with the properties of the piecewise linearized function. 
We will also drop the decomposition into $F(\tx)$ and the increment $\Delta F(\tx,\Delta x)$ and thus simply consider a globally defined piecewise linear continuous (PL) mapping 
$$ F(x) \; : \;  \R^n \mapsto \R^m  \; .$$   
Like  for the (possibly) underlying nonsmooth  mapping, our ultimate purpose is to solve certain basic numerical tasks, in particular (un)constrained optimization, equation solving, and the numerical integration of dynamical systems.  Here we will consider, for $m=n$, the problem of solving the formally well determined system of equations  
\begin{equation}
 F(x)  \; = \; 0 \in \R^n , \quad \text{for} \quad x \in \R^n. 
\label{probs} 
 \end{equation}
 The paper is organized as follows: In Section 2 we introduce PL functions $F$ in {\em abs-normal} form, a term that was apparently introduced  by Barton and Khan in a more general nonlinear setting \cite{springerlink}.   In Section 3 we describe the resulting polyhedral structure and give an explicit procedure for calculating generalized  Jacobians of $F$, which were shown in  \cite{springerlink, Khan:2013:EEC} and \cite{griewank2013stable} to be conically active limiting Jacobians of the underlying piecewise smooth function, whenever $F$ was obtained as its piecewise linearization. In Section 4  we examine the relation between the global properties of bijectivity and coherent orientation, which coincide under certain rather generic conditions.  Section 5 discusses sufficient conditions for the global convergence of the generalized Newton method, which is often referred to as semi-smooth Newton. In Section 6 we unfold the system by elevating the intermediate switching variables to the status of full 
variables. As is the case for the unfolding of smooth singular equations \cite{golubitzky}, in this process some regularity is gained, but some information is also lost. The resulting system, that we call the complementary piecewise linear system (CPL), is always  simply switched and as shown in Section 7, it can be solved by two different 
 fixed point methods and several variants  of generalized Newton.    Finally, the complementary system can also be rewritten as a linear complementarity problem \cite{0757.90078} with coherent orientation being equivalent to the P-matrix property.   The final \mbox{Section  8} summarizes our results and provides an outlook to further developments.         
 
\section{The abs-normal form}
As also observed by Scholtes in \cite{scholtes2012introduction} any piecewise linear scalar function $f : \R^n \mapsto \R$ 
has a so-called \textbf{max-min} representation  
$$ f(x) \;  = \; \max_{1 \leq i \leq l}  \,  \min_{j \in M_i} a_j^\top x + b_j $$
where the $l$ index sets $M_i$ are contained  in $\{1,2 \ldots k\}$ for some $k \in \N$ and the $a_i \in \R^n$, $b_i \in \R$ are constant coefficients.
For a PL vector function $F : \R^n \mapsto \R^m$ each one of the $m$ component functions can be represented in the same way. Moreover,  
using the equivalences  
$$ \max(u,w) = \tfrac{1}{2}(u+w+|u-w|) \quad \mbox{and} \quad  \min(u,w) = \tfrac{1}{2}(u+w-|u-w|) $$   
one can express all \textbf{min} and \textbf{max} expressions in terms of $s \geq 0$ absolute value functions $| z_i |$, whose arguments $z_i$ are called {\em switching variables.} 

Observing that each $z_i$ is an  affine function of absolute values $|z_j|$ with $j < i$ and the independents $x_k$ for $k \leq n $, one arrives at an \textbf{abs-normal} representation
  \begin{eqnarray}
 \begin{bmatrix} z  \\ y  \end{bmatrix} \;  = \; \begin{bmatrix} c \\  b \end{bmatrix} +   \begin{bmatrix} Z & L \\ J & Y   \end{bmatrix} \; 
\begin{bmatrix} x \\  |z|  \end{bmatrix} \label{absnormal}  \; .
\end{eqnarray} 
Here the two vectors and four  matrices specifying the function $F$ have the formats
$$ c \in \R^s, \;  Z \in \R^{s\times n}, \;  L \in   \R^{s\times s}, \;   b \in \R^m, \; J \in   \R^{m\times n}, \;   Y \in   \R^{m\times s}. $$ 
The matrix $L$ is strictly lower triangular so that for given $x$ the components of $z=z(x)$ and thus $|z|$ can be unambiguously  computed one by one. Specifically, we have $L_{i, j} \, \neq \, 0$ exactly if $z_i$ depends directly on $|z_j|$ so that there is an edge between the  nodes $j$ and $i$ in the corresponding data dependency graph. This graph is always acyclic and the components of $x$, $y$ and $z$ represent its roots, leaves and internal vertices, respectively.  

Of course, the  representation \eqref{absnormal} is by no means unique for a given mapping $F$. One would naturally strive to make the representation as concise as possible in some sense.  
Excluding incidental cancellations, we find that the smallest integer $\nu \leq s $ for which $$ L^\nu \; = \; 0  $$
corresponds to the maximal number of internal nodes in any chain in the data dependency graph.  We will call this the  
{\bf switching depth} and consider it as key measure of the combinatorial difficulty of the function $F$. In this terminology, $F$ is fully
linear  exactly if $\nu =0$ with $s=0$ and thus $z$, $Z$, and $L$ are empty. We will refer to  this limiting situation as the {\bf smooth case.} 
We will call $F$ {\bf simply switched} if $\nu=1$,  a situation that arises for example in complementarity problems, where none of the nonsmooth elements are superimposed. We conjecture that, for any PL mapping $F: \R^n \mapsto \R^m $, there is an abs-normal representation with a switching depth 
$\nu \leq \bar \nu(n) = 2\, n-1$. 

Formulations similar to our abs-normal form have been used for a long time
in the engineering literature. In \cite{van1999explicit,leenaerts} several classes of
PL models are compared, \textbf{Chua1} has switching depth 1 and
\textbf{Gr\"u} as well as \textbf{Bokh2} are limited to switching depth 2.
It is shown there that all of them are specializations of the model
\textbf{Bokh1}, which is a priori implicit in that evaluating $y$ for
given $x$ requires the solution of an LCP with a system matrix $D$.
However, if $D$ is also lower triangular solving the LCP requires simply a
forward substitution. Then, provided $D$ is nonsingular, the intermediate variables $z$ can be rescaled such that $D-I$ and consequently the 
M\"obius transform $L = (I+D)^{-1}(I-D)$ of $D$
become strictly lower triangular. $L$ then defines an abs-normal form equivalent to the \textbf{Bokh1} system.

Mangasarian and Meyer also observed in \cite{mama} the connection between LCPs and what 
they call an absolute value equation (AVE), the concept of which is closely related to our complementary system (CPL). 
We will partly replicate and strengthen their result.  
As we have noticed, the abs-normal form is general enough
to represent all continuous PL functions, so we will not use the even
greater generality of the implicit \textbf{Bokh1} model.  

In  the more mathematical literature,  piecewise linear systems 
are often specified by linear pieces on simplices defined by systems of linear inequalities. These approaches may also be interpreted as 
conjunctive programming or mixed integer nonlinear programs (MINLP) as in  \cite{martin}. However, these representations tend to 
be of combinatorial complexity and highly redundant, whereas 
the abs-normal form is stable and completely free of redundancy. In particular, any perturbation of the  four matrices 
 $Z, L, J$ and $Y$ that preserves the strict lower triangularity of $L$ again unambiguously defines a  
 continuous   PL function $y=F(x)$.

In the simply switched case we have $z = c+ Z x$, which means that potential kinks occur at the union of the $s$ hyperplanes 
$z_i(x) = c_i + e_i^\top Z x=0 $ for $i=1\ldots s$. We will then say that the kinks satisfy the \textbf{linear independence kink qualification LIKQ}  if the normals of the hyperplanes intersecting at some point $x$ are always linearly independent. This implies in particular that the vector $z = c+Zx$ can never have more than $n$ vanishing components. LIKQ is implied by  all square  submatrices of $[c,Z] \in \R^{s \times (1+n)}$ of order $\min(s,n+1)$ being nonsingular.  That slightly stronger condition is for example satisfied if
$c=\mathbf{1}$ is  the vector of ones and $Z=(\lambda_i^j)^{i=1\ldots
s}_{j=1\ldots n}$ is a Vandermonde matrix at distinct abscissas
$\lambda_i$ for $i=1\ldots s$. Consequently,  the polynomial $P(c,Z)$ formed by the
product of the determinants of all maximal square submatrices $[c,Z]$ does
not vanish at the Vandermonde choice and the same is true for 
almost all matrices $[c,Z] \in \R^{s \times (1+n)}$. In other words, LIKQ is a generic property, like linear independence of active constraints in linear optimization (LOP). 

\subsection*{The Rosette example}
To highlight the possible properties of PL functions we take a look at the following class of examples. Positively homogenous functions in two variables are uniquely defined by their values on the unit circle, which must be $2\pi$ periodic functions of the  polar angle $\varphi(x)  = \arctan(x_1,x_2)$.  More specifically, we assume that we have a monotonically growing sequence of angles 
$$  0 = \varphi_0 < \varphi_1 < \ldots   < \varphi_{n-1} <   \varphi_{n} = 2  \pi $$
and corresponding values 
$$(\psi_i)_{i= 0\ldots n}  \; \mbox{with}  \;  \psi_n - \psi_0 =  2  p  \pi \;  \mbox{for} \;  p\in \N  \; .$$
By suitable subdivisions we can ensure that the increments  $\varphi_i - \varphi_{i-1}$ and $|\psi_i - \psi_{i-1}|$ are all less than $\pi$.  
Then there exists a homogenous piecewise linear function $F : \R^2 \mapsto \R^2$ such that
$$ F(\cos \varphi_i,  \sin{\varphi_i})   \; =  \;  (\cos \psi_i, \sin{\psi_i})  \quad \mbox{for} \quad i = 0 \ldots n \; .$$
We can make \(F\) unique by minimizing the number of linear pieces through the natural requirement that $F$ is linear on the sectors \[ \big\{ (x,y) \in \R^2 \mid \varphi_i < \arctan\left(\tfrac yx \right) < \varphi_{i+1} \big\} .\]
As shown in Figure \ref{fig:triangles} the function $F$ can be visualized as a mapping between the triangles 
$(0,0), (\cos \varphi_{i-1} , \sin{\varphi_{i-1}}), (\cos \varphi_{i}, \sin{\varphi_{i}})$ in the domain
and the triangles \\ $(0,0), (\cos \psi_{i-1}, \sin{\psi_{i-1}}), (\cos \psi_{i}, \sin{\psi_{i}})$ in the range. By imposing certain conditions on the angles 
$\psi_i$ we can ensure certain properties of the resulting $F$. More specifically, 
the following implications hold true
 \begin{center}
\begin{tabular}{lcl}
	$\psi_i$ strictly monotone and $p=1$  &\(\implies\)& \(F\) injective, \\
	$\psi_i$ strictly monotone and $p>1 $  &\(\implies\)& \(F\) not injective but open, \\
	$\psi_i$ are not monotone  but \(p > 0\) &\(\implies\)& \(F\) not  open but surjective.
\end{tabular}
\end{center}
\begin{figure}[H]
\includegraphics[width= 0.9\linewidth]{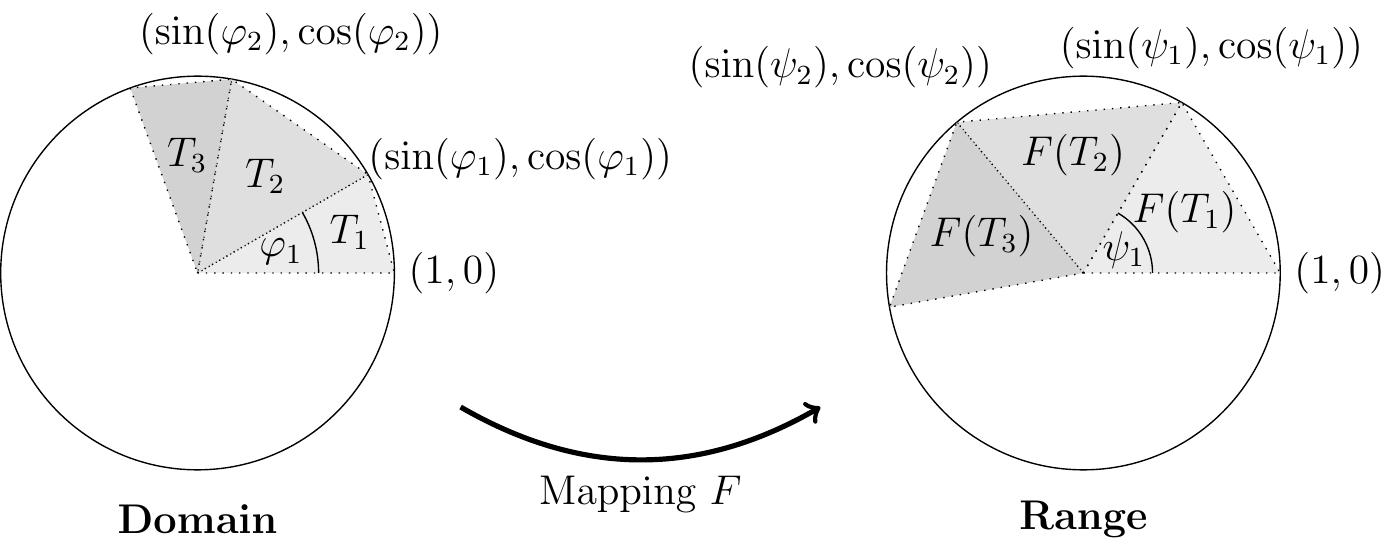}
\caption{Piecewise linear Rosette Example on $\R^2$} 
\label{fig:triangles}
\end{figure}
 
In other words, we have a simple class of examples, which demonstrate that the well known chain of implications \cite{scholtes2012introduction}
\be \mathbf{bijective} \quad \Longleftrightarrow\quad  
 \mathbf{injective} \quad \Longrightarrow\quad  
 \mathbf{open} \quad \Longrightarrow\quad   \mathbf{surjective} \label{chain0} \ee
for general PL  functions cannot be strengthened. Here openness means that all images $y=F(x)$ are in the interior of $F(B_r(x))$ 
for any ball $B_r(x)$ about any preimage $x$ of $y$. Moreover, in the PL case openness is equivalent to coherent orientation, i.e., the property that the determinants of all linear pieces have the same nonzero determinant sign.  In the context of the abs-normal form we can verify this important property more or less explicitly as follows.

\section{Polyhedral structure and limiting Jacobians}
As in \cite{griewank2013stable} we define the signature vector and matrix by  
$$ \sigma \equiv  \sigma(x)  \equiv  \mathbf{sign}(z(x)) \in \{-1,0,1\}^s  \quad  \mbox{and} \quad     \Sigma  \equiv  \Sigma(x) \equiv  \mathbf{diag}(\sigma) \; \in \{-1,0,1\}^{s \times s}. $$
This vector maps $\R^n$ into $\{-1,0,1\}^s$ and represents the control flow in our calculation. As an aside we note that all possible sign combinations must indeed occur if $Z$ is surjective, which requires $s \leq n$ so that there may actually occur $3^n$ different signatures. As in \cite{griewank2013stable} one can verify that the corresponding sets 
$$ P_\sigma \; \equiv \; \{ x \in \R^n : \sigma(x) = \sigma \}     $$
are relatively open and convex polyhedra in $\R^n$. Being  inverse images they are mutually disjoint and span the whole domain $\R^n$. By continuity it follows that  $P_\sigma$ must be open
(possibly empty) if $\sigma$ is definite in that all its components are nonzero.  In degenerate situations there may be some indefinite $\sigma$ that are nevertheless {\bf  open} in that 
 $P_\sigma$ is open. 
 
The limiting Jacobian $\partial^L\!  F(x)$ at some $x \in \R^n$, i.e., the limits of all proper Fr\'echet derivatives in its  neighborhood, is in the PL case simply the finite set 
$$   \partial^L\! F(x) \; = \; \{ J_\sigma : x \in \overline P_\sigma \, \mbox{with} \; \sigma \; \mbox{open}  \} \; . $$ 
The Clarke generalized Jacobian is the convex hull $\partial F(x) = \mathbf{conv} \left ( \partial^L \! F(x) \right ) $. In general it will be quite difficult to calculate all elements of the generating set $  \partial^L \! F(x)$ and we will usually shy away from that combinatorial effort.  

\subsection*{Explicit Jacobian representation}

 On all {open} $\sigma$ we find that $|z| = \Sigma z $, so that the first equation in \eqref{absnormal} yields 
 $$ (I- L\Sigma) z = c + Z x   \quad \mbox{and} \quad z \; = \; (I- L \Sigma)^{-1}(c+Z x)   \; . $$
 Notice that due to the strict triangularity of $L\,\Sigma$ the inverse of $(I- L \Sigma)$ is well defined and polynomial in the entries of $L$. 
  Moreover, due to the structural nilpotency degree $\nu$  of $L$ we obtain the Neumann expansion 
  \begin{eqnarray}
  (I- L \Sigma)^{-1}  \; = \; I + L \Sigma +  (L \Sigma)^2 + \cdots  + (L \Sigma)^{(\nu -1)} \; . 
  \end{eqnarray}
 In the simply switched case $\nu = 1$ we have $L=0$ and thus the expansion reduces to $I^{-1}=I$. When $\nu=2$, we have the linear inverse 
 $  (I- L \Sigma)^{-1} = I + L \Sigma $.  Substituting this expression into the second part of \eqref{absnormal} we obtain the local representation:
 \begin{proposition}    
 On all open  $P_\sigma$ the dependents $y$ can be directly expressed in terms of $x$, namely as   
 \begin{equation}
  y \; = \; b + Y \Sigma (I- L \Sigma)^{-1} c  + J_\sigma \,  x \quad  \mbox{with} \quad    J_\sigma  =  J  + Y \Sigma(I-L\Sigma)^{-1}Z .  \label{genjac}
  \end{equation}  
 Here  $J_\sigma$ is the Jacobian of $F$ restricted to $P_\sigma$. It reduces to $J_\sigma  = J + Y \Sigma Z$ for simply switched problems ($\nu=1$) and to $J$ for 
 smooth problems ($\nu=0$). 
 \label{genjacprop}
\end{proposition}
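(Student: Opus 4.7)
The plan is to reduce the piecewise affine expression for $F$ on $P_\sigma$ to an honest affine formula by resolving the implicit recursion $z = c + Zx + L|z|$ into a closed form, using the sign-locking property of open signature cells together with the strict triangularity of $L$.

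First I would observe that if $\sigma$ is open then for every $x \in P_\sigma$ each component $\sigma_i$ is nonzero, so $|z_i| = \sigma_i z_i$, i.e., the identity $|z| = \Sigma z$ holds pointwise on $P_\sigma$. Substituting this into the first row of \eqref{absnormal} gives $z = c + Zx + L\Sigma z$, hence $(I - L\Sigma)z = c + Zx$. Because $L$ is strictly lower triangular and $\Sigma$ is diagonal, $L\Sigma$ is also strictly lower triangular and therefore nilpotent; in particular $(L\Sigma)^\nu = 0$, so $I - L\Sigma$ is invertible with the finite Neumann series already displayed in the text. This yields $z = (I - L\Sigma)^{-1}(c + Zx)$ as a genuinely affine function of $x$ on all of $P_\sigma$.

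Next I would substitute this closed form for $z$ (and hence for $|z| = \Sigma z$) into the second row of \eqref{absnormal}:
\begin{equation*}
y \;=\; b + Jx + Y|z| \;=\; b + Jx + Y\Sigma(I - L\Sigma)^{-1}(c + Zx).
\end{equation*}
Collecting the constant and linear parts in $x$ immediately gives the claimed formula
\begin{equation*}
y \;=\; b + Y\Sigma(I - L\Sigma)^{-1}c + \bigl(J + Y\Sigma(I - L\Sigma)^{-1}Z\bigr)x,
\end{equation*}
so that $J_\sigma = J + Y\Sigma(I - L\Sigma)^{-1}Z$ is the Jacobian of $F$ on the open polyhedron $P_\sigma$ by inspection (the expression is affine in $x$, and $P_\sigma$ being open makes it the classical Fr\'echet derivative, not merely a one-sided directional object).

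Finally, the two degenerate regimes are read off directly. For $\nu = 1$ we have $L = 0$, so $(I - L\Sigma)^{-1} = I$ and $J_\sigma = J + Y\Sigma Z$. For $\nu = 0$ the switching variables are absent ($s = 0$), so $\Sigma$, $Z$, $L$, $Y$ are all empty and $J_\sigma = J$. I do not anticipate any real obstacle here: the only subtlety is ensuring that the sign-locking $|z| = \Sigma z$ is used only where $\sigma$ is open, which is exactly the hypothesis; the invertibility of $I - L\Sigma$ is automatic from the structural strict triangularity imposed in the abs-normal form and does not rely on any nondegeneracy assumption on the data.
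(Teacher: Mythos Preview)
Your proof is correct and follows essentially the same route as the paper: use $|z|=\Sigma z$ on an open $P_\sigma$, invert $I-L\Sigma$ via its strict lower triangularity to solve for $z$, and substitute into the second block of \eqref{absnormal}. One minor quibble: the paper allows an indefinite $\sigma$ to be ``open'' in degenerate situations, so your claim that each $\sigma_i$ is nonzero is slightly too strong---but $|z_i|=\sigma_i z_i$ still holds trivially when $\sigma_i=0$ (since then $z_i\equiv 0$ on $P_\sigma$), so the argument is unaffected.
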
 
\subsection*{Polynomial escape}
Computing generalized Jacobians  $J_\sigma$ according to \eqref{genjac} is
quite simple, once an open signature $\sigma$ and thus the corresponding
diagonal $\Sigma$ are known.
To find, for a given $x$, some open $\sigma$  with the closure  $\bar
P_{\sigma}$ containing $x$ one may use the following trick, which we like to call
{\bf polynomial escape}.  Due to piecewise linearity the complement $\cC$
of all open $P_\sigma$
is contained in the union of finitely many hypersurfaces.
Hence, no polynomial path of the form
$$     x(t) \; \equiv \; x + \sum_{i=1}^n \hat e_i t^i  \quad \mbox{with} \quad
\det \left [\hat e_1, \hat e_2, \ldots, \hat e_n \right ] \neq 0 , \quad \mbox{for} \quad
\hat e_i \in \R^n$$
can be contained  in $\cC$. In other words, we find for some $\sigma$ and
$\bar t > 0$ that $x(t) \in P_\sigma$ for all $t \in (0,\bar t)$.
The corresponding $\sigma$ can be computed by lexicographic
differentiation as introduced by Nesterov \cite{nesterov2005lexicographic} and described in
a little more detail in \cite{griewank2013stable}. There it is also shown that
any such $J_\sigma$ is in fact a generalized Jacobian of the underlying
nonlinear function if $F$ was obtained by piecewise linearization.  
Finally, by suitably selecting $\hat e_1=d \neq 0 $, one can make sure that the
generalized Jacobian obtained is active in  a cone containing the given
direction $d$ at least in its closure.

\section{Coherent Orientation and Injectivity}
As in the smooth case, the determinants of the Jacobains $J_\sigma$ are of crucial importance for the properties of the PL function $F:\R^n\to\R^n$. 
It is called coherently oriented if all its Jacobians have the same nonzero determinant sign. As stated for example in \cite{scholtes2012introduction}, the central property 
openness in the  chain \eqref{chain0} is, for PL functions, equivalent to coherent orientation.   
For simply switched $F$, like for example all KKT systems of  QOPs, we have essentially the same situation as in the affine case, namely bijectivity follows already from  coherent orientation and LIKQ. 
\begin{proposition}\label{SS+LIKQ}
If $F$ is simply switched in that $L=0$ and its kinks satisfy LIKQ then $F$ is bijective if and only if it is  coherently oriented.
\end{proposition}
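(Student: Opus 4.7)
The plan is to prove the biconditional in two halves. The forward direction, bijectivity implies coherent orientation, is essentially free from (\ref{chain0}) and the discussion preceding the proposition: a bijective $F$ is open, and for continuous PL maps openness is equivalent to coherent orientation; neither LIKQ nor simple switching is needed here.

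For the converse I would first observe that coherent orientation already gives openness, and that the PL image $F(\R^n) = \bigcup_{\sigma \text{ open}} F(\bar P_\sigma)$ is a finite union of affine images of closed polyhedra, hence closed; a nonempty clopen subset of the connected $\R^n$ equals $\R^n$, so $F$ is surjective. The remaining task is to prove injectivity, which is where the simply switched structure and LIKQ are used. Suppose for contradiction $F(x_1) = F(x_2)$ with $x_1 \neq x_2$ and set $x(t) = x_1 + t(x_2 - x_1)$. Generically the signature $\sigma(x(t))$ is open except at finitely many parameter values, where some $z_i(x(t))$ crosses zero; if instead the entire segment happens to lie inside one or several kink hyperplanes $\{z_i = 0\}$ I would perturb to $x(t) + \epsilon \eta$ with a generic $\eta$, argue on the perturbed segment, and pass $\epsilon \downarrow 0$ using compactness of $[-1,1]^s$. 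Integrating the piecewise constant derivative of $t \mapsto F(x(t))$ and invoking the simply switched formula $J_\sigma = J + Y\Sigma Z$ yields the mean value identity
\[
0 \;=\; F(x_2) - F(x_1) \;=\; \bigl(J + Y \bar\Sigma Z\bigr)(x_2 - x_1),
\]
where $\bar\Sigma$ is a diagonal matrix with entries in $[-1,1]$, being a convex combination of the $\pm 1$ signature matrices visited along the way.

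The key step is to show that $\bar J \equiv J + Y\bar\Sigma Z$ is nonsingular for every such $\bar\Sigma$, which forces $x_2 = x_1$. Writing $Y\Sigma Z = \sum_{i=1}^s \sigma_i\, Y_{:,i} Z_{i,:}$ as a sum of rank-one updates depending linearly on a single $\sigma_i$, the columns of $J + Y\Sigma Z$ are affine in each $\sigma_i$; any term in the multilinear column expansion of the determinant that picks two contributions with the same index $i$ is proportional to $\det[\ldots, Y_{:,i}, \ldots, Y_{:,i}, \ldots] = 0$, so $\sigma \mapsto \det(J + Y\Sigma Z)$ is multilinear in $\sigma_1, \ldots, \sigma_s$. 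Any multilinear function on $[-1,1]^s$ is the convex combination of its $2^s$ vertex values with nonnegative weights $\prod_i (1 + \epsilon_i \sigma_i)/2$ summing to one; coherent orientation makes all these vertex determinants share a common nonzero sign, which is therefore inherited throughout $[-1,1]^s$. Evaluating at $\bar\Sigma$ gives $\det \bar J \neq 0$, contradicting $\bar J(x_2 - x_1) = 0$.

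The main obstacle I anticipate lies in the degenerate case where $[x_1, x_2]$ happens to be contained in several kink hyperplanes simultaneously, so that no fully open signature is visited along its interior. This is precisely where LIKQ must earn its keep: it bounds the number of codimension-one strata the segment can lie on and guarantees that a single generic perturbation direction $\eta$ escapes all of them, so that the limiting argument in the mean value identity passes cleanly.
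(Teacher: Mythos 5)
The forward direction and your surjectivity argument are fine (surjectivity is not even needed, since \eqref{chain0} already gives bijective $\Longleftrightarrow$ injective). The gap is in the key step of the converse: you need $\det(J+Y\Sigma Z)$ to carry one common nonzero sign at \emph{all} vertices $\Sigma\in\diag\{-1,1\}^s$ of the cube (your multilinear interpolation expresses the value at $\bar\Sigma$ through all $2^s$ vertex values, not just the visited ones), but coherent orientation of $F$ only constrains the determinants $\det J_\sigma$ for signatures $\sigma$ whose polyhedra $P_\sigma$ are nonempty and open. When $s>n$ most sign patterns are not realized, and the unrealized ones are uncontrolled even under LIKQ. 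Concretely, take $n=1$, $s=2$, $F(x)=x+|x|-|x-1|$: the realized slopes are $1,3,1$, so $F$ is coherently oriented (indeed bijective) and LIKQ holds since the kinks $x=0$ and $x=1$ do not meet, yet the unrealized pattern $\sigma=(-1,+1)$ gives $J+Y\Sigma Z=1-1-1=-1<0$. So the ``common sign on the whole cube'' you invoke is false in general; it is exactly coherent orientation of the complementary system $H$, i.e.\ $\rho_0^s(S)<1$, which the paper points out is strictly stronger than coherent orientation of $F$. Nor can you retreat to the weaker claim you actually need, namely that the convex combination $\bar J=\sum_i\lambda_i J_{\sigma_i}$ of the Jacobians visited along the segment is nonsingular: matrices sharing a determinant sign can average to a singular matrix (e.g.\ $I$ and $-I$ in $\R^{2\times 2}$), so coherent orientation alone does not rescue the mean value identity.

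This also shows that LIKQ has to do more work than you assign it (escaping degenerate segments by a generic perturbation). In the paper's proof LIKQ is what makes the interpolation legitimate: the argument is local, at a point $x$ where $m\le n$ components of $z=c+Zx$ vanish, and the linear independence of the corresponding rows of $Z$ guarantees that \emph{all} $2^m$ sign patterns in those components (with the remaining $s-m$ signs fixed at their values near $x$) are attained on nonempty open polyhedra whose closures contain $x$. Coherent orientation then applies precisely to these vertex matrices, the one-variable linearity of $\sigma_i\mapsto\det(J+Y\Sigma Z)$ (a rank-one update) propagates the common sign through the sub-cube, every element of the Clarke generalized Jacobian $\partial F(x)$ is nonsingular, and Clarke's inverse function theorem gives local injectivity everywhere, hence injectivity and bijectivity. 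To repair your argument you would have to confine the interpolation to sign patterns that LIKQ certifies as locally realized, which essentially reproduces the paper's pointwise argument rather than the global segment argument you propose.
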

\begin{proof}
If \(F\) is bijective it follows from Scholtes' chain of implications \eqref{chain0} that it is already coherently oriented. For the inverse direction: On the basis of the mean value theorem, see Prop$7.1.16$ in \cite{pang1992linear}, Clarke showed that  $F$ has an inverse function near  some
point $x$ if all elements of the generalized
Jacobian $\partial F(x)$ are nonsingular. At all points where $F$ is
differentiable this follows from the assumed coherent orientation.
At all other points a certain number of $m \leq s$ components of $\sigma$
vanish, which means in the simply switched case that the
$s-$vector $c+Z x$ has $m$ zero components. In fact, it may contain at
most $m \leq n$ zeros since otherwise a corresponding
$(n+1) \times (n+1)$ sub-matrix of $[c,Z]$ would have the nonzero null
vector  $(1,x^\top)^\top \in \R^{n+1}$. Without loss of generality we may
assume that exactly the first $m \leq n$ components of $z=z(x)$ vanish.
The remaining ones will keep their sign in a sufficiently small
neighborhood of $x$. Due to the linear independence of the first $m$ rows of $Z$ we can
find  arbitrarily small perturbations $\Delta x \in \R^n$ such that
the first $m$ components of $c+Z(x+\Delta x)$ have any one of $2^m$ sign
patterns. Correspondingly, the first $m$ components of the signature vector
$\sigma \in \R^s$ attain any \(\{-1,1\}\) pattern  on some open domain whose
closure contains the given points $x$. Hence, $\partial F(x)$ contains all
matrices
$J_\sigma = J +Y \Sigma Z$ where the last $s-m$ components of $\Sigma$ are
fixed and the first $m$ may be $+1$ or $-1$. By assumption, all these
$J_\sigma$ have the same determinant sign. Changing just one $\sigma_i\in\{-1,0,1\}$ of
the first $m$ components continuously from $-1$ to $+1$ corresponds to
a rank one change in the corresponding matrix $J_\sigma$, whose
determinant varies linearly with respect to $\sigma_i$ and therefore
cannot change
signs in between.  Thus the $J_\sigma$ along all edges have the same
determinant signs, which are inherited by the ones on the face and so on.
Therefore, we have shown that all generalized Jacobians  are nonsingular so that
$F$ is everywhere locally injective and  also globally injective.
 \end{proof} 

\begin{lemma}
  Any $F$ satisfying the assumptions of the proposition is {\em stably coherently oriented} in that all modifications generated by  small perturbation of $[c,Z]$ are also coherently oriented. 
\end{lemma}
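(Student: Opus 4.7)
The plan is to combine the continuity of Jacobian determinants in the entries of $[c,Z]$ with the combinatorial stability of the kink hyperplane arrangement under LIKQ-preserving perturbations.

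First, I would note that LIKQ is itself stable under small perturbations: it amounts to the non-vanishing of the determinants of certain maximal square sub-matrices of $[c,Z]$, all of which are polynomials in the entries, so the set of LIKQ-compatible $[c,Z]$ is open. Likewise, for each fixed $\sigma \in \{-1,+1\}^s$, $\det J_\sigma = \det(J + Y\Sigma Z)$ is a polynomial in $Z$ and hence a continuous function of the perturbation.

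Next, I would argue that the set of realized definite signatures $\{\sigma\in\{-1,+1\}^s : P_\sigma\neq\emptyset\}$ is locally constant on the open LIKQ region. One inclusion is easy: each realized $P_\sigma$ is a nonempty open polyhedron, so selecting an interior point $x$ and perturbing $[c,Z]$ by less than the smallest local slack $\min_i\sigma_i z_i(x)$ keeps all strict inequalities $\sigma_i z_i(x)>0$ in force, so $\sigma$ remains realized. For the reverse inclusion I would invoke the fact that under LIKQ the arrangement is simple --- at most $n$ hyperplanes share any common point, and the normals of any such coincident set are linearly independent --- so its face lattice is combinatorially stable under small perturbations and no new full-dimensional chambers can appear. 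Combining these two observations, any small LIKQ-preserving perturbation of $[c,Z]$ leaves the finite list of realized signatures unchanged, while each $\det J_\sigma$ along this list varies continuously and was nonzero with a common sign by coherent orientation of the unperturbed $F$. Hence for all sufficiently small perturbations the common sign persists, and the perturbed function is again coherently oriented.

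The main obstacle is the combinatorial stability claim, namely that no previously unrealized signature can suddenly become feasible. If one prefers a self-contained argument over citing hyperplane-arrangement theory, one can encode infeasibility of $\{\sigma_i z_i(x)>0\}_i$ by a Gordan-type nonnegative certificate on $[c,Z]$ and use LIKQ to keep the certificate's support bounded away from degeneracy, so that it persists under small perturbations.
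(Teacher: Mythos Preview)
Your approach is essentially the same as the paper's: both argue that the existing open polyhedra $P_\sigma$ persist under small perturbation with their Jacobian determinant signs intact, and that no new open $P_\sigma$ can appear, then conclude by continuity of $\det J_\sigma$ in $Z$. The paper handles the ``no new chambers'' step by a direct limiting argument (if some $\sigma$ were open for arbitrarily small perturbations, points of those perturbed $P_\sigma$ accumulate at a point of the original $\overline{P_\sigma}$ where the active constraints must be linearly dependent, contradicting LIKQ), whereas you appeal to the standard combinatorial stability of simple hyperplane arrangements; these are two ways of packaging the same fact.

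One minor imprecision worth fixing: your sentence ``LIKQ amounts to the non-vanishing of the determinants of certain maximal square sub-matrices of $[c,Z]$'' overstates things. The paper explicitly says that the sub-matrix condition is a \emph{slightly stronger} sufficient condition for LIKQ, not an equivalent one. LIKQ only requires linear independence of the normals that are \emph{simultaneously active}, so sub-matrices corresponding to index sets that never co-vanish are unconstrained. This does not harm your argument, since what you actually use is that LIKQ makes the arrangement simple, and simplicity (together with the resulting combinatorial stability and hence openness of LIKQ) is exactly what the arrangement-theoretic result gives you. Just rephrase that opening sentence accordingly.
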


\begin{proof}
We firstly note that each open polyhedron of the original system is a simplex whose vertices are intersections of exactly 
$n+1$ linearly independent hypersurfaces. Hence for sufficiently small perturbations of the data each of them persist and remain nondegenerate. 
Moreover, the determinant of the also continuously varying Jacobians maintain the same sign. Now suppose  some arbitrarily small perturbations had an additional 
open polyhedron, for which we may assume without loss of generality the same definite signature $\sigma$, due to the finiteness of the whole situation. Then the 
corresponding polyhedron $P_\sigma$ of the original problem must be nonempty but nonopen. That means the linear inequalities active at any one of its elements must be 
linearly dependent in violation of LIKQ. \end{proof}
      The converse is not true, since one may modify any $F$ with an unstable decomposition at $x$ into one that is stably 
coherently oriented by adding a suitable multiple $\alpha$ of the identity so that $F(x)$ becomes $F(x)+\alpha x$. This modification does not affect $z$ and 
thus the lack of LIKQ. 

As we have seen the Rosette example may be open but not injective, which is not surprising since it has the switching depth 2 and is not stably coherently oriented.     
Just assuming stable coherent orientation, we find that 
all the small perturbations satisfying LIKQ are injective and $F$, as the limit of such bijective
perturbations, inherits this property by the proposition that follows from the lemma below. 
\begin{lemma}\label{lemm42}
Let $\cD\subseteq\R^n$ be open, and let
$\{F_k\}$ be a sequence of continuous injective maps $F_k:\cD\to \R^n$
which converges uniformly on compact sets to~$F:\cD\to\R^n$. 
Then for every $x_0\in \cD$ and every $\varepsilon>0$ with
$B_\varepsilon(x_0)\subseteq \cD$
there exists $k_0$ such that
$F(x_0)\in F_k(B_\varepsilon(x_0))$ for all $k\ge k_0$.
\begin{proof}
Let $y_0:=F(x_0)$.
Since $F^{-1}(y_0)$ is discrete, we can choose $r>0$ such that
$B_{2r}(x_0)\subseteq \cD$ and $B_{2r}(x_0)\cap F^{-1}(y_0)=\{x_0\}$.
After decreasing~$r$ if necessary, we can assume
$r\le\varepsilon$ for the given~$\varepsilon$. Write $\Omega:=B_r(x_0)$.
Then $y_0\notin F(\partial\Omega)$, where \(\partial \Omega\) is the border of \(\Omega\) in the sense of \cite{quarteroni2000numerical}, hence,
$\operatorname{dist}(y_0,F(\partial\Omega))/2=:\delta>0$ (note that
$F(\partial\Omega)$ is compact since $F$ is again continuous).
\parindent=.3cm
Choose $k'$ such that $y_k:=F_k(x_0)\in B_\delta(y_0)$
for all $k\ge k'$.
Choose $k_0\ge k'$ such that
$\|(F-F_k)\restr{\partial\Omega}\|_\infty<\delta$
for all $k\ge k_0$.
Then, for each of these~$k$, we have
\[\operatorname{dist}(y_0,F_k(\partial\Omega))\ge \operatorname{dist}
(y_0,F(\partial\Omega))-\|(F-F_k)\restr{\partial\Omega}\|_\infty
> 2\delta-\delta=\delta
\]
and, consequently, $B_\delta(y_0)\subseteq\R^n\setminus F_k(\partial\Omega)$.
Because of $y_k\in B_\delta(y_0)$, the points $y_0$ and~$y_k$ lie in the
same connected component of
$\R^n\setminus F_k(\partial\Omega)$. Therefore we have
\[
d(F_k,\Omega,y_0)=d(F_k,\Omega,y_k)
\]
where $d$ denotes the Brouwer degree (see e.g., \cite{ruzicka2004}).
The right-hand side of this equation is $\pm 1$ because
$F_k\restr{\overline{\Omega}}$ is an injective continuous map from a compact
set to a Hausdorff space, hence, a homeomorphism onto its image.
Thus, $d(F_k,\Omega,y_0)=\pm 1\ne0$ and, therefore, $y_0\in F_k(\Omega)$
for all $k\ge k_0$. The statement of the Lemma now follows from
$\Omega=B_r(x_0)\subseteq B_\varepsilon(x_0)$.
\end{proof}
\end{lemma}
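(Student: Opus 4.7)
My plan is to prove this via Brouwer degree theory, since we want to show that a particular value $y_0 := F(x_0)$ lies in the image of a continuous map on an open ball, and the degree is exactly designed to detect such surjectivity in a homotopy-invariant way. The strategy is: first, localize to a ball $\Omega = B_r(x_0)$ with $r \le \varepsilon$ on whose boundary $F$ stays bounded away from $y_0$; second, transfer this separation to $F_k$ via uniform convergence; third, compute the degree of $F_k$ on $\Omega$ at $y_k := F_k(x_0)$ directly from injectivity; and fourth, conclude that the degree at $y_0$ agrees and is nonzero, which forces $y_0 \in F_k(\Omega)$.

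To realize this, I would first use that $F^{-1}(y_0)$ has $x_0$ as an isolated point in a small neighborhood — either appealing to discreteness of the preimage or, more concretely, shrinking $r$ so that $y_0 \notin F(\partial B_r(x_0))$. Since $\partial \Omega$ is compact and $F$ is continuous, $F(\partial \Omega)$ is compact, so $\delta := \operatorname{dist}(y_0, F(\partial \Omega))/2 > 0$. Uniform convergence on the compact set $\partial \Omega$ then yields an index $k_0$ for which both $\|F - F_k\|_\infty < \delta$ on $\partial \Omega$ and $F_k(x_0) \in B_\delta(y_0)$. A triangle inequality gives $B_\delta(y_0) \cap F_k(\partial \Omega) = \emptyset$ for $k \ge k_0$, so $y_0$ and $y_k$ lie in the same connected component of $\R^n \setminus F_k(\partial \Omega)$, and hence $d(F_k, \Omega, y_0) = d(F_k, \Omega, y_k)$ by the constancy of Brouwer degree on components.

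For the degree at $y_k$, I would invoke the standard fact that a continuous injective map from the compact set $\overline{\Omega}$ into the Hausdorff space $\R^n$ is a homeomorphism onto its image, and hence has local degree $\pm 1$ at any interior image point. Therefore $d(F_k, \Omega, y_0) = \pm 1 \neq 0$, which by the solvability property of the degree forces the existence of some $x \in \Omega$ with $F_k(x) = y_0$, i.e. $y_0 \in F_k(B_r(x_0)) \subseteq F_k(B_\varepsilon(x_0))$.

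The main obstacle, as I see it, is the first step: justifying that some $r \le \varepsilon$ exists with $y_0 \notin F(\partial B_r(x_0))$. In general, a uniform limit of injective maps need not be injective, so isolation of $x_0$ in $F^{-1}(y_0)$ is not automatic and needs either an explicit hypothesis (discreteness) or an appeal to the structure at hand — e.g.\ in the piecewise linear setting of this paper, $F^{-1}(y_0)$ is a finite union of polyhedra, and one can select $r$ so that the sphere $\partial B_r(x_0)$ meets none of the polyhedra of positive dimension containing $x_0$. Every other step is either a standard compactness argument or a textbook property of the Brouwer degree.
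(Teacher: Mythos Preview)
Your proposal is correct and follows essentially the same route as the paper's proof: localize to a ball $\Omega=B_r(x_0)$ with $y_0\notin F(\partial\Omega)$, use uniform convergence on $\partial\Omega$ to place $y_0$ and $y_k=F_k(x_0)$ in the same component of $\R^n\setminus F_k(\partial\Omega)$, and then read off $d(F_k,\Omega,y_0)=d(F_k,\Omega,y_k)=\pm1$ from injectivity of $F_k$. The obstacle you flag is real and is resolved in the paper exactly as you anticipate, namely by invoking discreteness of $F^{-1}(y_0)$ (stated as a standing assumption in the surrounding Proposition rather than in the Lemma itself) to isolate $x_0$ and hence obtain the required $r$.
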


\begin{proposition}
Let $\{F_k\}$ be defined as in Lemma \ref{lemm42}. Assume that
the preimage $F^{-1}(y)\subseteq \cD$ is discrete for every
$y\in\operatorname{im}(F)$.
Then $F$ is injective. 
\begin{proof} The Proposition follows immediately by contradiction. 
Suppose there were $x_1\ne x_2$ in~$\cD$ with $F(x_1)=F(x_2)=:y_0$.
Choose $\varepsilon>0$ small enough such that
$B_\varepsilon(x_1)$ and $B_\varepsilon(x_2)$
are disjoint subsets of~$\cD$.
Let $k_1, k_2$ be as in Lemma 4.3, that is, such that
$y_0\in F_k(B_\varepsilon(x_i))$ for all $k\ge k_i$, $i=1,2$.
Then $y_0\in F_k(B_\varepsilon(x_1))\cap F_k(B_\varepsilon(x_2))$
for every $k\ge\max\{k_1,k_2\}$, contradicting injectivity of the~$F_k$.
\end{proof}
\end{proposition}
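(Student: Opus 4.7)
The plan is to argue by contradiction and lean entirely on Lemma \ref{lemm42}, applied twice. Suppose $F$ is not injective, so there exist distinct points $x_1, x_2 \in \cD$ with $F(x_1) = F(x_2) =: y_0$. The strategy will be to produce two distinct preimages of $y_0$ under a common $F_k$ and contradict the injectivity of $F_k$.

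First I would isolate $x_1$ and $x_2$ in disjoint neighborhoods inside $\cD$. Since $\cD$ is open and $x_1 \ne x_2$, one can select a single $\varepsilon > 0$ small enough that the balls $B_\varepsilon(x_1)$ and $B_\varepsilon(x_2)$ are disjoint and both contained in $\cD$. The discreteness assumption on $F^{-1}(y_0)$ is not needed to separate $x_1$ and $x_2$ here, but it is precisely what allows Lemma \ref{lemm42} to be invoked at each $x_i$ individually, since the lemma's internal construction of an isolating radius uses the fact that $F^{-1}(y_0)$ has no accumulation points near the chosen center.

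Next I apply Lemma \ref{lemm42} at $x_0 := x_1$ and then at $x_0 := x_2$ with this common $\varepsilon$ (shrinking $\varepsilon$ further if needed to meet the hypothesis $B_\varepsilon(x_i) \subseteq \cD$). The lemma delivers indices $k_1$ and $k_2$ with $y_0 \in F_k(B_\varepsilon(x_i))$ for every $k \ge k_i$, $i = 1, 2$. For any $k \ge \max\{k_1, k_2\}$ the point $y_0$ then possesses a preimage under $F_k$ inside each of the two disjoint balls, producing two genuinely distinct preimages and contradicting the injectivity of $F_k$.

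There is essentially no obstacle in this argument, because all of the analytic work is already absorbed into Lemma \ref{lemm42} via its Brouwer-degree computation. Once the lemma is in hand, the proposition is a short bookkeeping exercise on two disjoint balls, and the only condition to verify is that a single $\varepsilon$ can be chosen that is simultaneously small enough for both applications of the lemma and small enough to keep the balls disjoint, which is immediate.
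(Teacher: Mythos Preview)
Your proposal is correct and follows essentially the same route as the paper: both argue by contradiction, separate $x_1$ and $x_2$ by disjoint $\varepsilon$-balls in $\cD$, apply Lemma~\ref{lemm42} at each point to obtain indices $k_1,k_2$, and then note that for $k\ge\max\{k_1,k_2\}$ the point $y_0$ has two distinct $F_k$-preimages, contradicting injectivity of $F_k$. Your remark that the discreteness of $F^{-1}(y_0)$ is what makes Lemma~\ref{lemm42} applicable (its proof begins by isolating $x_0$ inside $F^{-1}(y_0)$) is accurate and worth noting.
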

Hence we obtain the following strengthening of Proposition \ref{SS+LIKQ}
\begin{corollary}
If $F$ is simply switched and stably coherently oriented in that all small perturbations have this property, then 
it is bijective.
\end{corollary}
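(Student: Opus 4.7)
The plan is to realize $F$ as a uniform limit on compact sets of simply switched, coherently oriented PL maps $F_k$ that additionally satisfy LIKQ, to apply Proposition~\ref{SS+LIKQ} to obtain bijectivity of each $F_k$, and then to transfer injectivity to $F$ via the preceding Proposition. Bijectivity of $F$ then follows from Scholtes' chain \eqref{chain0}.

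First I would construct $\{F_k\}$. Since LIKQ holds iff the polynomial $P(c,Z)$ formed from the maximal minors of $[c,Z]$ is nonzero, the data violating LIKQ lie in a proper algebraic subset of $\R^s\times\R^{s\times n}$. Hence arbitrarily small perturbations $(c_k,Z_k)\to(c,Z)$ can be chosen satisfying LIKQ, with $b$, $J$, $Y$ and $L=0$ held fixed. The resulting $F_k$ are simply switched by construction, and by the hypothesis of stable coherent orientation they remain coherently oriented for all $k$ large enough. Proposition~\ref{SS+LIKQ} then makes each $F_k$ bijective, and in particular injective. Uniform convergence on compacts is immediate from the explicit formula \eqref{genjac}, which depends polynomially on $(c,Z)$.

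The remaining hypothesis of the preceding Proposition is discreteness of $F^{-1}(y)$ for every $y$ in the image. On each open polyhedron $P_\sigma$ the Jacobian $J_\sigma = J + Y\Sigma Z$ is nonsingular by coherent orientation, so $F$ restricted to $P_\sigma$ is an affine injection contributing at most one preimage of $y$; since the signatures form a finite set, the intersection of $F^{-1}(y)$ with the union of open cells is finite. For the lower-dimensional complement $\cC$ I would argue as follows: two adjacent open-cell Jacobians $J_{\sigma_+}$ and $J_{\sigma_-}$ differ by a rank-one matrix whose row space is the normal direction of the shared face, so the update vanishes on the face tangent space. Hence $J_{\sigma_+}$ and $J_{\sigma_-}$ agree on that tangent space, and their common restriction must be injective -- otherwise both would share a nonzero face-tangent kernel vector, contradicting nonsingularity. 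Iterating across faces of all codimensions, $F$ is affine and injective on each face, so $F^{-1}(y)\cap\cC$ is also finite.

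The hard part will be exactly this discreteness argument on $\cC$, since LIKQ is unavailable for $F$ itself and the boundary generalized Jacobians need not be nonsingular; everything else reduces to mechanical use of Proposition~\ref{SS+LIKQ} and the approximation machinery already in place. Once discreteness is secured, the preceding Proposition yields injectivity of $F$, and the chain \eqref{chain0} upgrades injectivity to bijectivity.
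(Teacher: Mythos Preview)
Your approach is exactly the one the paper intends: perturb $(c,Z)$ to gain LIKQ, invoke Proposition~\ref{SS+LIKQ} on the perturbations, and pass to the limit via the preceding Proposition, finishing with \eqref{chain0}. You even address the discreteness hypothesis of that Proposition explicitly, which the paper leaves implicit.

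Your worry about discreteness on $\cC$ being ``the hard part'' is unfounded, and the rank-one face argument is more work than needed. Since $F$ itself is (trivially) among its small perturbations, it is coherently oriented, so every $J_\sigma$ with $\sigma$ open is nonsingular; by polynomial escape each point lies in some closure $\overline{P_\sigma}$, on which $F$ is affine with invertible Jacobian $J_\sigma$ and hence injective. As there are only finitely many open signatures, $F^{-1}(y)$ is finite for every $y$, which is stronger than discrete. (A minor quibble: the paper notes $P(c,Z)\neq 0$ is slightly \emph{stronger} than LIKQ, not equivalent; this does not affect your genericity argument.)
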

The simply switched one-dimensional example $F(x) = x - |x- \zeta| + |x +
\zeta |$ is monotonically growing and thus coherently oriented if
$\zeta \leq 0$ but for $\zeta > 0$  it has a slope of $-1$ in a small
interval about the origin.  Hence, for the limiting case $\zeta =0$, where $F(x)
\equiv x$, we have coherent orientation, but that property is lost for
arbitrarily small $\zeta >0 $. Nevertheless, the function is of course
injective so that one might conjecture that
for simply switched PL functions openness already implies injectvity. 

However, that is not the case as one can see from the following instance
of the Rosette example.
\be  F(x) \equiv \begin{bmatrix}  |x_1| - |x_2| \\
 \textstyle\frac{1}{2} |x_1+x_2| - \textstyle\frac{1}{2} |x_1-x_2| \end{bmatrix}.  \label{schuethex} \ee 
It is simply switched and coherently oriented, but not injective since $F$
is even, so that $F(-x)=F(x)$. The LIKQ is violated since the four kinks
$\{x_1=0\},\{ x_2=0\}, \{x_1=x_2\}$ and $\{x_1 = -x_2\}$ all intersect at the origin.
Moreover, one can see that the perturbations
$$ F_\varepsilon (x)\;  \equiv \; \begin{bmatrix}  |x_1+\varepsilon| -|x_2+\varepsilon| \\
 \tfrac{1}{2} |x_1+x_2| - \tfrac{1}{2} |x_1-x_2| \end{bmatrix}   $$
are  no longer coherently oriented for $\varepsilon \neq 0$. More specifically,
for $\varepsilon >0$ we have the Jacobian
$$ F_\varepsilon^\prime \; = \;  \begin{bmatrix} 1 & -1 \\ 0 & -1
\end{bmatrix} \quad  \mbox{at}  \quad  x \; = \; \begin{bmatrix} x_1 \\ x_2 \end{bmatrix} \; = \;  \begin{bmatrix}
-\varepsilon/2 \\  - \varepsilon/4  \end{bmatrix} $$
whose determinant is $-1$ so that we do not have stable coherent orientation. 
 
\section{Generalized Newton Variants} 
If all elements of $ \partial^L \! F(x_*)$ are nonsingular  at some root $x_*\in F^{-1}(0)$, it follows from the celebrated theorem of Qi and Sun 
\cite{Qi:NonVN} that the full step iteration 
\begin{equation}
 x_{+} \; = \;  x - J_\sigma^{-1} F(x) ,\quad   \mbox{with} \quad J_\sigma \in  \partial^L F(x) \label{newton}
 \end{equation}
 converges from all $x_0$ sufficiently close to $x_*$. In fact, this result holds here trivially, since the iteration converges in one step from all points in the
open neighborhood
 $$ \Omega(x_*) \equiv  \{P_\sigma :  x_* \in \overline P_\sigma \}^\circ .$$  
 Of course, this means that all the combinatorial issues have already been resolved by the choice of $x_0$. 
 
Much more interesting is the question under which conditions the full step Newton method \eqref{newton}  
converges globally, i.e., from all initial points $x_0$. Using the mean value theorem of Clarke stated for example as Prop. $7$.$1$.$16$ in \cite{pang1992linear}, one can establish the following global convergence result.

\begin{proposition}[Full step convergence] $\quad $\label{propalt} \\ 
Given $x_\ast \in F^{-1}(0)$ the full step Newton method converges from all $x_0 \in \R^n$ in finitely many steps to $x_\ast$  if, with respect to some induced matrix norm, either 
of the following contractivity assumptions is satisfied 
\begin{equation}
\| I -   J_\sigma^{-1}  J_{\tilde \sigma}   \|  < 1, \quad \mbox{for all} \quad \sigma, \tilde \sigma \, \mathbf{open}  ,    \label{errorred} 
\end{equation}
or
\begin{equation}
\| I -    J_{\tilde \sigma} J_{\sigma}^{-1}   \|  < 1, \quad \mbox{for all} \quad \sigma, \tilde \sigma \, \mathbf{open} .   \label{residred} 
\end{equation}
In either case the root $\{x_*\} = F^{-1}(0) $ is unique. 
\end{proposition}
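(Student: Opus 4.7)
The plan is to reduce everything to the \emph{piecewise linear mean value formula}: for any two points $u, v\in\R^n$ the segment $[u,v]$ meets only finitely many open polyhedra $P_{\tilde\sigma_1},\ldots,P_{\tilde\sigma_r}$ on subsets of positive one-dimensional measure, and letting $\lambda_i\ge 0$ be their relative lengths (which sum to one since the lower-dimensional skeleton contributes measure zero to the segment), continuity and piecewise affineness of $F$ give
\[
F(v)-F(u) \;=\; \Bigl(\sum_{i=1}^{r} \lambda_i\, J_{\tilde\sigma_i}\Bigr)(v-u).
\]
First I would apply this with $(u,v)=(x_\ast,x)$ and $F(x_\ast)=0$ to write $F(x)=M(x-x_\ast)$ for $M:=\sum_i\lambda_i J_{\tilde\sigma_i}$. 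Substituting into the Newton step yields
\[
x_+-x_\ast \;=\; \sum_{i}\lambda_i\bigl(I-J_\sigma^{-1}J_{\tilde\sigma_i}\bigr)(x-x_\ast),
\]
so that assumption \eqref{errorred} delivers the geometric error contraction $\|x_+-x_\ast\|\le q\,\|x-x_\ast\|$ with $q:=\max_{\sigma,\tilde\sigma}\|I-J_\sigma^{-1}J_{\tilde\sigma}\|<1$. Dually, applying the identity with $(u,v)=(x,x_+)$ and using $x_+-x=-J_\sigma^{-1}F(x)$ gives
\[
F(x_+) \;=\; \sum_{j}\mu_j\bigl(I-J_{\tilde\sigma_j}J_\sigma^{-1}\bigr)F(x),
\]
which under \eqref{residred} yields the residual contraction $\|F(x_+)\|\le q\,\|F(x)\|$.

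From either contraction I would conclude $x_k\to x_\ast$: error contraction does so directly, and residual contraction combined with $\|x_{k+1}-x_k\|\le(\max_\sigma\|J_\sigma^{-1}\|)\,\|F(x_k)\|$ makes $\{x_k\}$ Cauchy with a limit that is a root of $F$. To upgrade this to finite termination I would invoke the pigeonhole principle on the finitely many open signatures: some $\sigma$ must be visited infinitely often, so along a subsequence $x_{k_j}\in P_\sigma$ and the limit $x_\ast$ lies in $\overline{P_\sigma}$. Since $\overline{P_\sigma}$ is convex and $F$ is affine with Jacobian $J_\sigma$ there, the single-piece Newton identity forces $x_{k_j+1}=x_\ast$ exactly.

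For uniqueness I would apply the mean value formula once more to two hypothetical roots $x_\ast, x_{\ast\ast}$, obtaining $M(x_\ast-x_{\ast\ast})=0$. Factoring out an arbitrary open $J_\sigma$ writes $J_\sigma^{-1}M=I-\sum_i\lambda_i(I-J_\sigma^{-1}J_{\tilde\sigma_i})$, whose subtracted matrix has norm at most $q<1$ under \eqref{errorred} and is therefore invertible by a Neumann series argument; the symmetric factorization $MJ_\sigma^{-1}=I-\sum_i\lambda_i(I-J_{\tilde\sigma_i}J_\sigma^{-1})$ handles \eqref{residred}. Either way $M$ is invertible and $x_\ast=x_{\ast\ast}$.

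The main obstacle I anticipate is a clean statement of the mean value identity using only \emph{open} signatures, since iterates may land on the degenerate skeleton where the choice of $J_\sigma\in\partial^L F(x)$ is not unique; one must verify that any such choice still satisfies the contractivity bound and that the one-dimensional skeleton of the polyhedral arrangement contributes measure zero along each line segment. A secondary subtlety is avoiding circularity in the residual case: I would therefore prove uniqueness first by the pure Neumann argument, then use it to identify the Cauchy limit with $x_\ast$, and only afterwards invoke the pigeonhole argument on signatures to deduce finite termination.
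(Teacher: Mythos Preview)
Your proposal is correct and follows essentially the same route as the paper: both invoke the (Clarke / piecewise-linear) mean value theorem to write the relevant increment as a convex combination $\sum_i\lambda_i J_{\tilde\sigma_i}$ applied to $(x-x_\ast)$ or $-J_\sigma^{-1}F(x)$, and then bound $\|I-J_\sigma^{-1}A\|$ respectively $\|I-AJ_\sigma^{-1}\|$ by the triangle inequality over the finitely many open signatures. Your handling of finite termination (pigeonhole on signatures, then one exact step since $x_{k_j},x_\ast\in\overline{P_\sigma}$) and of uniqueness (Neumann invertibility of the mean-value matrix $M$) is a bit more explicit than the paper's, which simply notes that once $\partial F(x)\subset\partial F(x_\ast)$ the next step lands exactly on $x_\ast$.
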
 
\begin{proof}  
 By the mean value theorem we derive from \eqref{newton} the solution error recurrence 
$$  x_{+} - x_\ast \; = \;  x- x_\ast - J_\sigma^{-1} A  ( x - x_*) \; = \;  \left [ I -  J_\sigma^{-1} A \right ]   ( x - x_*) $$
where for some $m \geq 1$ and $\lambda_i \in \R$ 
$$ A \; = \; \sum_{i=1}^m \lambda_i  \, J_{\sigma_i}  \quad \mbox{with} \quad \sum_{i=1}^m \lambda_i = 1 \quad \mbox{and} \quad  \lambda_i > 0. $$  
With a similar convex combination $\tilde A$ of limiting Jacobians  we find for the residual 
\begin{equation}
F(x_{+}) \;  =  \; F(x) -   \tilde A \,  J_{\sigma}^{-1} F(x) \; = \;  \left [ I -  \tilde  A \,  J_{\sigma}^{-1}\right ] F(x) . 
\end{equation} 
If we can ensure reduction of either norm $\|x-x_*\|$ or $\|F(x)\|$ by a fixed factor that implies at least linear convergence to a root. And then we eventually must reach an iterate $x$ such that 
$\partial F(x) \subset \partial F(x_*)$. In the next step we would get $x^+ = x_*$. 
By the triangle inequality and our assumption \eqref{errorred}  it follows  that 
 \[ \left \lVert I- J_\sigma^{-1} \sum_{i=1}^m\lambda_i J_{\sigma_i } \right  \rVert =  \left \lVert \sum_{i=1}^m\lambda_i\left( I- J_\sigma^{-1}  J_{\sigma_i } \right)  \right \rVert \le \sum_{i=1}^m\lambda_i \left  \lVert I- J_\sigma^{-1}  J_{\sigma_i }  \right \rVert   \;  < \; 1 .\]
Since the number of all Jacobians is finite, there is a global maximum of the term \eqref{errorred}, which bounds the reduction factor  $ \|x^+-x_*\|/\|x-x_*\|$. Similarly, \eqref{residred}  yields a bound less than $1$ on the ratio $\|F(x^+)\|/\|F(x)\|$.  This completes the proof.  
\end{proof}

The proposition deals with a special case of the general theory on  {\em nonsingular uniform Newton approximations} in the sense of \cite{pang1992linear}.
 Now we will look for sufficient conditions for the contractivity properties \eqref{errorred} or \eqref{residred} and thus global convergence of full step  Newton and injectivity of $F$ in terms of the abs-normal representation. 
  To obtain an explicit expression for the inverses $J_\sigma^{-1}$ we will assume that the matrix $J \in \R^{n\times n}$ representing the smooth part  of
 our function is nonsingular.  Should that a priori not be the case we can use the trivial identity 
 \be    v \; = \; ||v|+v |    - |v| , \quad \mbox{for} \quad v \in \R    \label{trivial} \ee
 to shift terms between the smooth and nonsmooth parts without changing the mapping $F$. However, for each modified entry we introduce two new switching variables and thus the abs-normal form and its various properties are significantly altered. Now we obtain the result.    
\begin{proposition} $\quad$ \\
 Assume that the abs-normal form of $F$ has an invertible smooth part $J$
and that
 $$\hat{\rho}\equiv \|J^{-1}Y\|_p\|Z\|_p<1-\|L\|_p  \; .$$ 
Then generalized Newton converges in finitely many iterations from any
$x_0$ to the then unique solution $x_*$ if
\be    \bar \rho \equiv\frac{2\hat \rho }{(1 - \hat \rho- \|L\|_p)(1- \|L\|_p)  }  \;  < 
\;  1 \; \;  \label{strongest}  \; .\ee
Moreover, the p-norms of both the solution error and the residual
are reduced by a factor no greater than $\bar \rho$ at each iteration.
\end{proposition}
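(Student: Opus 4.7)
The plan is to verify the first contractivity condition \eqref{errorred} of Proposition \ref{propalt} with factor at most $\bar\rho$; Proposition \ref{propalt} then delivers finite step convergence from every $x_0$, uniqueness of $x_*$, and the claimed error contraction. Throughout I will use the explicit form $J_\sigma = J + Y N_\sigma Z$ supplied by Proposition \ref{genjacprop}, writing $N_\sigma := \Sigma(I-L\Sigma)^{-1}$.

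First I will establish invertibility of every $J_\sigma$ corresponding to an open $\sigma$. Since $\Sigma$ is diagonal with entries in $\{-1,0,+1\}$, $\|\Sigma\|_p \leq 1$, and the strict lower triangularity of $L\Sigma$ combined with a Neumann expansion yields $\|N_\sigma\|_p \leq 1/(1-\|L\|_p)$. Factoring $J_\sigma = J(I + J^{-1}YN_\sigma Z)$ and using $\|J^{-1}YN_\sigma Z\|_p \leq \hat\rho/(1-\|L\|_p) < 1$ under the hypothesis, a second Neumann expansion produces both invertibility of $J_\sigma$ and the key bound
\[
 \bigl\|J_\sigma^{-1}Y\bigr\|_p \;\leq\; \frac{(1-\|L\|_p)\,\|J^{-1}Y\|_p}{1-\|L\|_p-\hat\rho} ,
\]
where the essential point is to keep the pair $J^{-1}Y$ bundled together so that no loose $\|Y\|_p\|J^{-1}\|_p$ appears; this is conveniently achieved through the push-through identity $(I+J^{-1}YN_\sigma Z)^{-1}J^{-1}Y = J^{-1}Y(I+N_\sigma ZJ^{-1}Y)^{-1}$.

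Next, for any two open signatures $\sigma$ and $\tilde\sigma$ I will compute
\[
 I - J_\sigma^{-1}J_{\tilde\sigma} \;=\; J_\sigma^{-1}(J_\sigma - J_{\tilde\sigma}) \;=\; J_\sigma^{-1}\,Y\,(N_\sigma - N_{\tilde\sigma})\,Z ,
\]
bound $\|N_\sigma - N_{\tilde\sigma}\|_p \leq 2/(1-\|L\|_p)$ by the triangle inequality applied to the individual Neumann estimates, and multiply by $\|Z\|_p$ to arrive at
\[
 \bigl\|I - J_\sigma^{-1}J_{\tilde\sigma}\bigr\|_p \;\leq\; \frac{2\hat\rho}{1-\|L\|_p-\hat\rho} \;\leq\; \bar\rho \;<\; 1 .
\]
The middle inequality uses only $1-\|L\|_p \leq 1$ in the second factor of the denominator of $\bar\rho$, while the last is the standing hypothesis \eqref{strongest}. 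This verifies \eqref{errorred}, and Proposition \ref{propalt} now supplies the remaining conclusions together with the contraction of the solution error by $\bar\rho$. The residual contraction is obtained by an analogous manipulation of $I - J_{\tilde\sigma}J_\sigma^{-1} = (J_\sigma - J_{\tilde\sigma})J_\sigma^{-1}$, using a Sherman--Morrison--Woodbury rewriting of $J_\sigma^{-1}$ to bring $J^{-1}Y$ into evidence; the additional factor $(1-\|L\|_p)^{-1}$ built into $\bar\rho$ relative to the error bound $2\hat\rho/(1-\|L\|_p-\hat\rho)$ is precisely what is needed to accommodate the looser bookkeeping on that side.

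The main technical obstacle is exactly this bookkeeping: arranging the estimates so that the hypothesis is formulated in terms of the single bundled quantity $\hat\rho = \|J^{-1}Y\|_p\|Z\|_p$, rather than the cruder product $\|Y\|_p\|J^{-1}\|_p\|Z\|_p$ which would arise from a naive submultiplicative bound. The factored form $J_\sigma = J(I + J^{-1}YN_\sigma Z)$, together with the observation $\|\Sigma\|_p \leq 1$ and a judicious application of the push-through identity, is the device that makes the estimate sharp enough to match the stated hypothesis.
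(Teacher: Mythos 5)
Your treatment of the error-side condition \eqref{errorred} is correct and essentially follows the paper's route, with one variation in bookkeeping: the paper bounds $\|J_\sigma^{-1}J\|_p$ by the Banach perturbation lemma (giving \eqref{invbound}) and $\|J^{-1}(J_{\tilde\sigma}-J_\sigma)\|_p$ by factoring the difference as $J^{-1}Y(I-\tilde\Sigma L)^{-1}(\tilde\Sigma-\Sigma)(I-L\Sigma)^{-1}Z$ (giving \eqref{divbound}), and multiplies the two, whereas you keep $J_\sigma^{-1}Y$ bundled via the push-through identity and estimate $\|N_\sigma-N_{\tilde\sigma}\|_p\le 2/(1-\|L\|_p)$ by the plain triangle inequality. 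Your version even yields the slightly sharper constant $2\hat\rho/(1-\|L\|_p-\hat\rho)\le\bar\rho$, and since Proposition \ref{propalt} requires only one of the two contractivity conditions, convergence in finitely many steps from any $x_0$, uniqueness of $x_*$, and the reduction of the solution error by a factor at most $\bar\rho$ all follow from what you have proved.

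The gap is in your residual clause. In $I-J_{\tilde\sigma}J_\sigma^{-1}=(J_\sigma-J_{\tilde\sigma})J_\sigma^{-1}=Y(N_\sigma-N_{\tilde\sigma})ZJ_\sigma^{-1}$ the factor $Y$ stands at the extreme left, so no Sherman--Morrison--Woodbury rewriting of $J_\sigma^{-1}$ can place a $J^{-1}$ to its left; push-through only gives $ZJ_\sigma^{-1}=(I+ZJ^{-1}YN_\sigma)^{-1}ZJ^{-1}$, and the resulting estimate involves $\|Y\|_p\,\|ZJ^{-1}\|_p$, which is not controlled by $\hat\rho=\|J^{-1}Y\|_p\|Z\|_p$ (the two products can differ by the condition number of $J$ in either direction), so the extra factor $(1-\|L\|_p)^{-1}$ built into $\bar\rho$ cannot rescue this side. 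What does work --- and is what the paper implicitly does by multiplying \eqref{invbound} and \eqref{divbound} --- is to conjugate by $J$: $J^{-1}\left(I-J_{\tilde\sigma}J_\sigma^{-1}\right)J=\left[J^{-1}(J_\sigma-J_{\tilde\sigma})\right]\left[J_\sigma^{-1}J\right]$ has $p$-norm at most $\bar\rho$, i.e.\ \eqref{residred} holds with constant $\bar\rho$ in the induced norm $\|v\|:=\|J^{-1}v\|_p$, so the residual contracts in that ($J$-weighted) norm. If you want the contraction of the plain $p$-norm of $F(x)$, as the wording of the proposition suggests, your sketch does not deliver it (and the paper's one-line multiplication is equally terse on this point); either state the residual contraction in the $J$-weighted norm or rely solely on \eqref{errorred} for the convergence claims.
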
 

\begin{proof}
        It follows from \eqref{genjac} that 
       \begin{align*} \| I-J^{-1}J_\sigma\|_p & \le \left . \|J^{-1}Y\|_p
\|(I-L\Sigma)^{-1}\|_p \|Z\|_p \; \leq  \;  \hat{\rho}\right /  (1-
\|L  \|_p).
        \end{align*}
        Hence we have by the Banach Pertubation Lemma that
        \[
        \|J^{-1}_\sigma J\|_p = \| [I-(I-J^{-1}J_\sigma)]^{-1} \|_p \le
 1 \left / [1-  \hat\rho \left / (1-\|L\|_p)  \right . ] \right . ,
        \]
        which immediately yields  for any pair of open signatures $\sigma,
\tilde \sigma$
 \be \| J_{\sigma}^{-1} J \|_p \; \leq \; \frac{1- \|L\|_p}{1 - \hat \rho-
\|L\|_p} .\label{invbound} \ee 
Furthermore we derive from  \eqref{genjac} that
\begin{eqnarray*}
  J^{-1} \left [J_{\tilde \sigma} -  J_{\sigma}\right ]  & = & J^{-1} Y
\left [ \tilde \Sigma (I-L \tilde \Sigma )^{-1} -  \Sigma (I-L \Sigma
)^{-1} \right] Z \\
  & = & J^{-1} Y \left [ (I-\tilde \Sigma L  )^{-1}  \tilde \Sigma- 
\Sigma (I-L \Sigma )^{-1} \right ]Z \\
   & = & J^{-1} Y (I-\tilde \Sigma L  )^{-1}  \left [  \tilde \Sigma (I-L
\Sigma ) -   (I- \tilde \Sigma L) \Sigma \right ]  (I-L  \Sigma  )^{-1}
 Z  \\
   & = & J^{-1} Y (I-\tilde \Sigma L  )^{-1}  \left [  \tilde \Sigma  - 
\Sigma  \right ]  (I-L  \Sigma  )^{-1}  Z .
 \end{eqnarray*} 
Now taking again norms and applying standard inequalities we find  
\be  \| J^{-1} \left (J_{\tilde \sigma} -  J_{\sigma}\right )  \|_p  \;
\leq \; \frac{2 \hat \rho }{(1 - \|L\|_p)^2} \; \, .  \label{divbound} \ee
By multiplication of  \eqref{invbound} and 
\eqref{divbound}, the last inequality ensures that both \eqref{errorred} and
\eqref{residred} are satisfied.
  \end{proof}

\subsection*{Piecewise Newton}
The conditions for the global convergence of full step Newton derived above are certainly rather strong and various globalizations like Ralph's 
path search have been proposed.   
On the other hand, it was observed in \cite{griewank2013stable} that coherent orientation implies that the fibres 
\begin{eqnarray} 
 [x_0]   \; \equiv \; \{ x  \in \R^n : F(x) = \lambda F(x_0), 0 < \lambda \in \R \} \label{fibre}
 \end{eqnarray} 
are, for almost all $x_0 \in \R^n$, bifurcation-free piecewise linear paths whose closure contains a root of $F$. The other singular fibres may have bifurcations, but there is always  a possibility to further reduce the residual towards a solution.   

The question how this piecewise Newton method is best implemented needs further investigation, but  numerical experiments are certainly encouraging \cite{piontkowski}.   
There is a key difference between this piecewise Newton and damped Newton in that piecewise Newton is not based on just any limiting Jacobian at 
the   current iterate, but on one that is indeed valid along the direction being taken. It cannot be guaranteed in the usual paradigm  that an oracle evaluates at any 
$x$ the residual $F(x)$ and some limiting Jacobian  $\partial^L F(x)$.    

We may summarize the results of this fourth and fifth section in the following graph of implications: 
 
\begin{center}
\! \! \! Contractivity $\Rightarrow$ Bijectivity $\; {\Longrightarrow} \; $ Openness $\Rightarrow $ Surjectivity $\qquad \qquad \qquad \qquad  \; \; \; \;  $\\
 \hspace*{4.6cm} $ \; ( \; \Longleftarrow \; $ if simply switched+stably coherently oriented ) 
\end{center}

The fact that the last two implications are  not reversible in general was already demonstrated in Section 2 on the 
Rosette example, which is not simply switched. The possibility of failure for full step Newton on bijective problems can be seen in the 
Rosette example \eqref{schuethex}. With a right-hand side $(1,-1)$ and a starting point $(2,1)$, Newton's method begins to cycle immediately.

\section{Schur complement and the complementary system} 
 
It turns out that we can eliminate $x$ when the smooth part $J$ is nonsingular .  
 \begin{lemma}\label{nonsingularschur} Provided that $\det(J) \neq 0$, we have 
 the Schur complement 
  $$S \equiv L- Z J^{-1}Y \; \in \; \R^{s \times s} $$
and in $P_\sigma$ it holds that
 $$ \det(J_\sigma) \; = \; \det(J)\, \det ( I - S \, \Sigma )  \; .$$
 Moreover,  if this determinant is nonzero, the inverse of $J_\sigma$ is given by
\be	J_\sigma^{-1} \;  = \; J^{-1} - J^{-1} \, Y\,\Sigma\, (I-S\, \Sigma)^{-1}Z\, J^{-1} .  \label{inv} \ee
\end{lemma}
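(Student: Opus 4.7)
The plan is to start from the explicit representation $J_\sigma = J + Y\Sigma(I-L\Sigma)^{-1}Z$ of Proposition~\ref{genjacprop} and treat the two claims separately. Both arguments rely on the observation that $L$, and hence $L\Sigma$ for any diagonal $\Sigma$, is strictly lower triangular, so that $\det(I - L\Sigma) = 1$ and $(I - L\Sigma)^{-1}$ is well defined as a Neumann polynomial in $L\Sigma$ (as already used in Section~3).

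For the determinant identity, I first factor $J_\sigma = J\bigl(I + J^{-1}Y\Sigma(I - L\Sigma)^{-1}Z\bigr)$ and apply Sylvester's determinant identity $\det(I + AB) = \det(I + BA)$ with $A = J^{-1}Y\Sigma$ and $B = (I - L\Sigma)^{-1}Z$. This transports the problem from $\R^{n\times n}$ into $\R^{s\times s}$ and yields
\[
\det(J_\sigma) \;=\; \det(J)\,\det\!\bigl(I + (I - L\Sigma)^{-1} Z J^{-1} Y \Sigma\bigr).
\]
Pulling $(I - L\Sigma)^{-1}$ out of the second determinant and using $\det(I - L\Sigma) = 1$ reduces this to $\det(I - L\Sigma + Z J^{-1} Y \Sigma) = \det\!\bigl(I - (L - Z J^{-1}Y)\Sigma\bigr) = \det(I - S\Sigma)$, which is the stated identity.

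For the inverse formula, the cleanest route is direct verification: I multiply $J_\sigma$ by the proposed right-hand side and expand the four resulting products. The identity term $I$ appears immediately, and the remaining three correction terms all carry a common left factor $Y\Sigma$ and a common right factor $ZJ^{-1}$, so they can be collected as $Y\Sigma [\,\cdot\,] ZJ^{-1}$, where the bracket reads
\[
(I - L\Sigma)^{-1} \;-\; (I - S\Sigma)^{-1} \;-\; (I - L\Sigma)^{-1} Z J^{-1} Y \Sigma (I - S\Sigma)^{-1}.
\]
To see that this bracket vanishes I multiply it on the left by $(I - L\Sigma)$ and on the right by $(I - S\Sigma)$; everything collapses to $(L - S - Z J^{-1} Y)\Sigma$, which is identically zero by the definition $S = L - Z J^{-1} Y$.

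The only real obstacle is bookkeeping the left/right placement of the non-commuting factors $\Sigma$, $L$, and $S$. Once the correct order is settled, both claims reduce to a single application of Sylvester's identity together with the tautology $S + Z J^{-1} Y = L$, and no assumption on definiteness of $\sigma$ is needed beyond the stated invertibility of $I - S\Sigma$.
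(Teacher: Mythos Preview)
Your determinant argument is essentially the paper's own proof: factor out $J$, apply Sylvester's identity $\det(I+AB)=\det(I+BA)$ to move from $\R^{n\times n}$ to $\R^{s\times s}$, then extract $(I-L\Sigma)^{-1}$ and use $\det(I-L\Sigma)=1$. The only difference is cosmetic---the paper groups the Sylvester factors as $A=J^{-1}Y\Sigma(I-L\Sigma)^{-1}$, $B=Z$ rather than your $A=J^{-1}Y\Sigma$, $B=(I-L\Sigma)^{-1}Z$, but both land on $\det(I-L\Sigma+ZJ^{-1}Y\Sigma)=\det(I-S\Sigma)$ in one line.

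For the inverse formula \eqref{inv} you actually go further than the paper, which states it without proof. Your direct verification is correct: the bracket $(I-L\Sigma)^{-1}-(I-S\Sigma)^{-1}-(I-L\Sigma)^{-1}ZJ^{-1}Y\Sigma(I-S\Sigma)^{-1}$, after left-multiplication by $I-L\Sigma$ and right-multiplication by $I-S\Sigma$, collapses to $(L-S-ZJ^{-1}Y)\Sigma=0$. An equally short alternative would have been to invoke the Sherman--Morrison--Woodbury identity on $J_\sigma=J+Y\Sigma(I-L\Sigma)^{-1}Z$ and then simplify the capacitance matrix $I-L\Sigma+ZJ^{-1}Y\Sigma$ to $I-S\Sigma$; either way the computation is the same size.
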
 \label{SMW}
\begin{proof}
As Sylvester's determinant theorem states, that $\det (I+AB) = \det (I+BA)$, we have 
\begin{align*}
	\det(J_\sigma)/\det(J) & =    \det \left [ I  \! + \!  J^{-1} Y \Sigma(I\! - \! L \Sigma)^{-1}Z \right ]  
	 =    \det \left [I \!  + \! Z  J^{-1}\, Y\, \Sigma\, (I \! - \! L \Sigma)^{-1} \right ] \\
	&=   \det(I- L\Sigma)^{-1} \, \det\left (I - L\, \Sigma + Z\, J^{-1}\, Y\, \Sigma \right ) =  \det ( I - S \, \Sigma ) 
\end{align*} 
where we have used that the unitary lower triangular matrix $I- L\Sigma$ has determinant 1. 
 \end{proof}
 Whenever $J$ dominates the other three submatrices, things are not too difficult, as we will see below.  Notice that nonsingular linear transformations on the 
 independents $x$ and/or the dependents $y$ leave the Schur complement completely unchanged. At least for (generalized) Newton variants we could 
 therefore assume without loss of generality that $J=I$, although that does not seem to help all that much. 
 
  Rescaling the switching variables $z$ by a positive diagonal matrix $D$ would modify $Z$ to $DZ$, $Y$ to 
$YD^{-1}$ and replace $L$ by the similarity transformation $DLD^{-1}$, which is still strictly lower triangular.  One can choose $D$ such that the transformed $DLD^{-1}$ is arbitrarily small  in any one of the standard norms that are monotonic in the coordinates, but that may require a pretty wild scaling. More important is the Schur complement $S$, which would also be replaced by its similarity transformation $DSD^{-1}$. 

\subsection*{Conditions for coherent orientation}
The condition that $\det(I - S \Sigma)$ be positive for all switching
matrices $\Sigma$ is sufficient for coherent orientation of $F$ -- a property that would characterize $S$ as $nonexpansive$ in the sense of Theorem 6.1.3 in \cite{neumaier1990interval}. In Theorem 2.3 of \cite{rump1997theorems}  Rump gave several equivalent properties, one of
which is that the {\em sign real spectral radius} 
$$   \rho_0^s(S) \; \equiv \; \max \left \{ \rho_0(\Sigma \,S ) : {\Sigma \in \mathbf{diag} \{-1,1\}^n}  \right \}  $$
is less than $1$. Here $\rho_0(S) \leq \rho(S)$ denotes the {\em real spectral radius} of a square matrix, i.e.,  the largest modulus of any real eigenvalue of $S \in \R^{n \times n}$. The complex eigenvalues are ignored in this maximization, which makes $\rho_0(S)$ highly discontinuous with respect to $S$. Remarkably,  $\rho_0^s(S)$ is again continuous in the entries of $S$ and it  vanishes exactly when $S$ is permuted strictly triangular. This  is true for the leading part $L$ of our Schur complement so that we must have  $\rho_0^s(S)< 1 $ when the additional term $Y J^{-1} Z$ is sufficiently small. 
In general,  deciding whether  $\rho_0^s(S)$ lies below  a given bound is an NP hard problem. Rump
also showed that the following property is sufficient, but not necessary for $\rho_0^s(S)< 1 $ and, thus,
coherent orientation.

 \begin{definition}
 An abs-normal form of F is called smoothly dominant if
 \[ \rho\equiv \| DSD^{-1} \|_p<1
  \]
for some  p-matrix norm and some positive diagonal scaling $D$.
\end{definition}
\noindent
This condition was already used by Bokhoven in his dissertation \cite{van1981piecewise}. Similarly, Mangasarian and Meyer \cite{mama} wrote their absolute value equation 
$      A \, x - | x| \; = \; b    $
in terms of the inverse $A= S^{-1}$. 

 Assuming smooth dominance of $A^{-1}$ for the special choice  $p=2$ and $D=I$ they showed unique solvability of the AVE. This can be shown directly using the contractivity of what Bokhoven and his followers call the {\em modulus algorithm} as discussed below. First we will show that  coherent orientation may be present even when all \(p\)-norms are substantially greater than \(1\), i.e., when the PL system is far 
from being smoothly dominant.    
 
\begin{lemma}
There are matrices $S_n \in \R^{n\times n}$ with signed real spectral radius   $\rho_0^s(S_n) \leq 0.9 $ for which all $p$ norms 
$\|D_n^{-1} S_n D_n\|_p$ with arbitrary diagonal scalings $D_n>0$ are greater than $1$, for $n \ge 3$ and furthermore   
$\lim_{n} \|D_n^{-1} S_n D_n\|_p = \infty .$   
\end{lemma}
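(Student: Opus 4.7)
The plan is to reduce the norm statement to one about the complex spectral radius, then exhibit matrices with small signed real spectral radius but large complex spectral radius.

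\medskip
\textbf{Step 1 (Reduction).} For any induced matrix $p$-norm and any invertible matrix $B$, $\|B^{-1} M B\|_p \geq \rho(B^{-1} M B) = \rho(M)$, where $\rho$ denotes the ordinary (complex) spectral radius. With $B = D_n$ a positive diagonal matrix, this yields $\|D_n^{-1} S_n D_n\|_p \geq \rho(S_n)$ for every $p \in [1,\infty]$ and every $D_n > 0$. It therefore suffices to produce $S_n \in \R^{n\times n}$ for $n \geq 3$ with $\rho_0^s(S_n) \leq 0.9$, $\rho(S_n) > 1$, and $\rho(S_n) \to \infty$ as $n \to \infty$.

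\medskip
\textbf{Step 2 (Base case $n=3$).} I would take $S_3 = 0.9\, T_3$ with
\[
T_3 = \begin{pmatrix} 0 & 1 & 1 \\ -1 & 0 & 1 \\ -1 & -1 & 0 \end{pmatrix}.
\]
Being skew-symmetric, $T_3$ has purely imaginary spectrum; a direct computation gives the characteristic polynomial $\lambda^3 + 3\lambda$, so $\rho(T_3) = \sqrt{3}$ and $\rho_0(T_3) = 0$. Enumerating the eight sign matrices (reduced to four equivalence classes by $\rho_0(-A) = \rho_0(A)$) shows that the characteristic polynomial of $\Sigma T_3$ is either $\lambda^3 + 3\lambda$ (when $\Sigma = \pm I$) or $\lambda^3 - \lambda$ (in all other cases), whence $\rho_0^s(T_3) = 1$. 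Scaling by $0.9$ gives $\rho_0^s(S_3) = 0.9$ and $\rho(S_3) = 0.9\sqrt{3} > 1$.

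\medskip
\textbf{Step 3 (General case).} A natural candidate is the scaled skew-sign matrix $S_n = (0.9/\rho_0^s(T_n))\, T_n$ with $(T_n)_{ij} = \operatorname{sign}(j-i)$. Since $T_n^\top = -T_n$, its eigenvalues are purely imaginary, given explicitly by $\pm i \cot\frac{(2k-1)\pi}{2n}$, so $\rho(T_n) = \cot(\pi/(2n)) \sim 2n/\pi \to \infty$. With this scaling $\rho_0^s(S_n) = 0.9$ and $\rho(S_n) = 0.9\,\rho(T_n)/\rho_0^s(T_n)$, so the conclusion reduces to the asymptotic $\rho_0^s(T_n) = o(\rho(T_n))$.

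\medskip
\textbf{Step 4 (Main obstacle).} The delicate step is proving $\rho_0^s(T_n) = o(n)$. By Bendixson's inequality, the real eigenvalues of $\Sigma T_n$ are bounded in modulus by the spectral norm of the symmetric part $\frac{1}{2}(\Sigma T_n + T_n^\top \Sigma) = \frac{1}{2}[\Sigma, T_n]$. This commutator has entries $\frac{1}{2}(\sigma_i - \sigma_j)\operatorname{sign}(j-i)$ that vanish whenever $\sigma_i = \sigma_j$; sorting rows and columns by sign class exposes a bipartite block
\[
\begin{pmatrix} 0 & B \\ B^\top & 0 \end{pmatrix}
\]
where $B$ is rectangular with $\pm 1$ entries inherited from $T_n$. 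The worst-case Frobenius bound $\|B\|_2 \leq \sqrt{k_+ k_-} \leq n/2$ merely matches the order of $\rho(T_n)$, so a refined estimate exploiting the specific $\operatorname{sign}(j-i)$ pattern (rather than the worst-case $\pm 1$ block), or alternatively a different family $K_n$ for which the gap $\rho/\rho_0^s$ is more easily controlled, is needed. Producing such an estimate -- rather than the verification of the spectral inequalities themselves -- is the truly delicate step of the proof.
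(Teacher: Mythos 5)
Your choice of matrix family coincides with the paper's (Rump's example $S_n=\tfrac{9}{10}(\sign(j-i))_{i,j}$), and your Step 1 is a correct and in fact rather elegant route to the norm statement: every induced norm dominates the (complex) spectral radius, which is invariant under diagonal similarity, so $\|D_n^{-1}S_nD_n\|_p\ge\rho(S_n)=0.9\cot(\pi/(2n))$ uniformly in $p$ and $D_n$; this exceeds $1$ already at $n=3$ (namely $0.9\sqrt3$) and grows like $1.8\,n/\pi$. The paper argues differently, via $\|DSD^{-1}\|_\infty=\|D|S|D^{-1}\|_\infty\ge\tfrac9{10}(n-1)$ followed by the norm-equivalence factor $n^{-1/p}$, which yields $\tfrac9{10}(n-1)n^{-1/p}$; your spectral-radius bound is actually sharper for small $n$ and $p$ near $1$, where the paper's lower bound drops below $1$.

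However, the proposal has a genuine gap exactly where you flag it: the bound $\rho_0^s(S_n)\le 0.9$ is established only for $n=3$. For general $n$ your Bendixson/commutator estimate in Step 4 gives only $O(n)$, the same order as $\rho(T_n)$, so it cannot separate the sign-real spectral radius from the complex one. The essential input is that $\rho_0^s\bigl((\sign(j-i))_{i,j}\bigr)=1$ for \emph{every} $n$, so that after scaling by $0.9$ the sign-real spectral radius stays at $0.9$ while the complex spectral radius, and hence every scaled induced norm, grows linearly. The paper does not prove this either; it imports it from \cite{rump1997theorems}. Without that citation, or an independent uniform bound on $\rho_0^s(T_n)$ (a mere $o(n)$ estimate would still require the extra check that $\rho(T_n)/\rho_0^s(T_n)>1/0.9$ for each $n\ge3$), the lemma is not proved, and as you note, no elementary route to such a bound is visible from the commutator structure alone.
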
 
\begin{proof}
Dropping the subscript $n$ and abbreviating $ e  \equiv (1 \ldots 1)^\top \in \R^n, \;  I \in \R^{n\times n} $ we consider   Rump's example
\begin{equation}
  S =  \tfrac{9}{10}\cdot(\sign(j-i))_{i,j=1\dots n}\in\R^{n\times n}     \quad \mbox{with} \quad  |S| = \tfrac 9{10}\left(e\,e^\top - I\right)  \; .    \label{eqn:RumpI}
\end{equation}  
Since,  for any $D = \diag(d)\in\R^{n\times n} $ with (\(d > 0\), componentwise) 
\[ \lVert DSD^{-1} \rVert_\infty = \lVert D\,|S| D^{-1} \rVert_\infty\; , \]
we obtain 
\begin{align*}
  \tfrac {10}9\lVert D\, S \,  D^{-1} \rVert_\infty &= \lVert D(ee^\top - I_n) D^{-1} \rVert_\infty = \lVert D\, ee^\top D^{-1} - I_n \rVert_\infty \\
  &\ge \left\lvert \lVert D\, ee^\top D^{-1} \rVert_\infty - \lVert I\rVert_\infty \right\rvert = \left\lvert \max_{ 1 \leq j \leq n} \sum_{i=1}^n\tfrac{d_i}{d_j} - 1 \right\rvert \; .
\end{align*}
By elementary arguments one can see that the expression on  the RHS attains its minimal value $n-1$ when all \(d_j\) are equal so that
\[ \lVert D\, S \,  D^{-1} \rVert_\infty \ge \tfrac 9{10}(n-1). \]
\noindent Now let \(\hat x \in \R^n\) be the unit vector \( \lVert \hat x\rVert_\infty = 1\) that maximizes the infinity norm \(\lVert DSD^{-1} \hat x\rVert_\infty\), such that
\begin{align*}
  \lVert DSD^{-1} \rVert_p &= \max_{\lVert x \rVert_p = 1} \lVert DSD^{-1}x \rVert_p \\ 
  &\ge \frac{\lVert DSD^{-1}\hat x \rVert_p}{\lVert \hat x\rVert_p} \ge \frac{\lVert DSD^{-1}\hat x \rVert_\infty}{\lVert\hat x\rVert_p}  \; . 
\end{align*}
Finally this yields by the equivalence of the vector norms \( \lVert \hat x\rVert_p \le n^{\frac 1p}\lVert \hat x\rVert_\infty = n^{\frac 1p} \) %
\[ \frac{\lVert DSD^{-1}\hat x \rVert_\infty}{\lVert\hat x\rVert_p}  \;  \ge \;  \frac{n-1}{n^{\frac 1p}} \; \;  \xrightarrow{n\to\infty} \; \;  \infty\; . \]
On the other hand,  we know from \cite{rump1997theorems}  that the sign real spectral radius satisfies 
$\rho_0^{s}(S) = 0.9 < 1$ so that we have coherent orientation of $F$ as asserted. 
\end{proof} 

To see that smooth dominance can also arise when $\rho(|S|) > 1$ let us consider the $2 \times 2$  matrix 
$$ S \; = \;  R(\tfrac{\pi}{2}) \; = \;  \frac{0.9}{\sqrt{2} } \begin{pmatrix}  1 & -1 \\ 1 &  1 \end{pmatrix} \; .$$
 It represents a rotation by $\pi/2$ followed by a contraction  by $0.9$. Then we have 
 $$   \|S\|_2    = 0.9 \; < \;  1 \; < \; {0.9}\,  {\sqrt{2}}    \; = \; \rho( |S|) \; .$$
As a more  interesting example for smooth dominance let us consider a problem
$$ T x + \max(x,0) = b ,   \quad \mbox{where} \quad T  \; \succ\; 0   $$
is  symmetric positive definite, which is the stronger assumption used in  \cite{brugnano2008iterative}. (The $\max$ is meant componentwise.)   
Rewriting this problem in abs-normal form using  $\max(x,0) \equiv (x + |x|)/2 $ we obtain
$$ z = x \quad \mbox{and} \quad y = -b + (T+I/2)x + |z|/2  \; .$$
This corresponds to $c=0, Z = I, L = 0, J = T+I/2, Y = I/2$ and yields the Schur complement 
$ S \; = \; 0 - (T+I/2)^{-1}/2 = - (I + 2 \, T)^{-1}  \prec 0$. It is negative definite  with spectral radius below $1$. 
Hence, we have smooth dominance as $ \|D S D^{-1} \|_2 \; < \; 1$ for  $D=I$. We have verified that the fixed point iteration suggested in \eqref{18a}  below converges when $T$ is the usual second order divided difference stencil. However, it does so very slowly and applying the generalized Newton iteration \eqref{newton} and equivalently \eqref{newtonz}, also  advocated in \cite{brugnano2008iterative} turns out to be much more effective.

An even stronger condition for smooth dominance and thus coherent orientation  follows  from the well known result of Perron-Frobenius.
\begin{lemma}{Perron-Frobenius scaling} \label{perronfrob} \\
Suppose that $S$ and hence its componentwise modulus $|S|$ is not  permuted block-triangular. 
Then the spectral radius $\rho(|S|)$ is positive and the corresponding eigenvector $d \in \R^n$ is strictly positive such that for 
$D= \mathbf{diag}(d)$ and $e = (1\ldots 1) \in \R^s$ 
$$  D^{-1} S d  \; \equiv  \; D^{-1} S D \, e  = \rho(|S|) e  \; \implies \; \|D^{-1} S D \|_\infty  \; = \; \rho(|S|) \; .$$
If $\rho(|S|)=0$, the norm  $\|D^{-1} S D \|_\infty$ can be made arbitrarily small.  
\end{lemma}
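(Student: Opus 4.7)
The plan is to invoke the classical Perron-Frobenius theorem applied to the entrywise modulus $|S|$, which is a nonnegative matrix. The hypothesis that $S$ (equivalently $|S|$) is not permuted block-triangular is precisely the Perron-Frobenius irreducibility condition for $|S|$, that is, strong connectivity of the directed graph of its nonzero entries, so the irreducible form of the theorem applies.

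Under irreducibility I would extract a strictly positive eigenvector $d > 0$ with $|S|\, d = \rho(|S|)\, d$ and $\rho(|S|) > 0$. Setting $D = \diag(d)$ and $e = (1,\ldots,1)^\top$ so that $D\,e = d$, the identity
\[
D^{-1} |S| D \, e \; = \; D^{-1} |S| d \; = \; \rho(|S|)\, D^{-1} d \; = \; \rho(|S|)\, e
\]
shows that every row of the nonnegative matrix $D^{-1} |S| D$ sums to exactly $\rho(|S|)$, giving $\|D^{-1} |S| D\|_\infty = \rho(|S|)$. Since $D$ is a positive diagonal, the entrywise identity $|D^{-1} S D|_{ij} = d_i^{-1} |S_{ij}|\, d_j = (D^{-1} |S| D)_{ij}$ holds, and the induced $\infty$-norm only sees entrywise absolute values, so the claimed equality $\|D^{-1} S D\|_\infty = \rho(|S|)$ follows immediately. (This also clarifies the statement's display, which is really an identity for $|S|$; the norm bound for $S$ inherits it through invariance of row-sum-of-moduli under sign changes.)

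For the second assertion, $\rho(|S|) = 0$ combined with the fact that any nonnegative matrix realizes its spectral radius as an eigenvalue forces $|S|$ to be nilpotent. A nilpotent nonnegative matrix is permutation-similar to a strictly triangular one, so after relabeling I may assume $S$ itself is strictly upper triangular; then the scaling $D_t = \diag(1, t, t^2, \ldots, t^{n-1})$ yields $(D_t^{-1} S D_t)_{ij} = t^{\,j-i} S_{ij}$ for $j > i$ and zero otherwise, so $\|D_t^{-1} S D_t\|_\infty$ is $O(t)$ as $t \to 0^+$ and can be made arbitrarily small. The only non-routine point in the whole argument is identifying ``not permuted block-triangular'' with Perron-Frobenius irreducibility, which is classical via the Frobenius normal form of nonnegative matrices; beyond citing this I do not anticipate a real obstacle.
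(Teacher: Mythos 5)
Your argument for the main assertion is essentially the paper's: take the Perron eigenvector $d>0$ of the irreducible nonnegative matrix $|S|$, observe that $e$ becomes the Perron vector of $D^{-1}|S|D$ so that all row sums equal $\rho(|S|)$, and use $|D^{-1}SD| = D^{-1}|S|D$ together with the fact that the $\infty$-norm only sees moduli. Where you genuinely diverge is the degenerate case $\rho(|S|)=0$: the paper perturbs, replacing $|S|$ by $|S|+\varepsilon\, e\,e^\top$, applies the first part to the perturbed (now positive, hence irreducible) matrix, and lets $\varepsilon\to 0$, implicitly using continuity of the spectral radius; you instead note that $\rho(|S|)=0$ forces nilpotency, hence permutation-similarity to a strictly triangular matrix, and then exhibit the explicit graded scaling $D_t=\diag(1,t,\dots,t^{n-1})$ giving $\|D_t^{-1}SD_t\|_\infty = O(t)$. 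Both are correct; your route is constructive and produces the concrete (if ``wild'', as the paper says of scaling $L$) diagonal scaling, at the cost of a permutation/relabeling step, while the paper's perturbation argument is shorter and stays entirely within the framework of the first assertion. Your parenthetical observation that the displayed eigen-identity is really a statement about $|S|$, inherited by $S$ because row sums of moduli are sign-invariant, is a fair reading of the (somewhat loosely stated) lemma and matches the paper's intent.
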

\begin{proof}
It is well known that all components of the eigenvector $d$ are positive if the corresponding eigenvalue $\rho(|S|)$ is nonzero. Then we find immediately that 
$e$ is the eigenvector associated with the largest eigenvalue of $|\tilde S|$ for $\tilde S = D^{-1} S D$ , which in turn shows that $\|\tilde S \|_\infty = \|| \tilde S| \|_\infty$ has the same value. If $\rho(|S|)=0$, we can add $\varepsilon \, e\, e^\top$ to $|S|$ and apply the first observation to establish the second.     
\end{proof}

According to the lemma, {\em absolute contractivity}, i.e. $\rho(|S|) <1$, implies smooth dominance in the infinity norm. Moreover, we may always similarity transform $S$ by some diagonal 
$D>0$ such that all rows of  $\tilde S \equiv D^{-1}S D$ have the same $l_1$ norm equaling $\rho(|S|)= \rho(|\tilde S|)$. We will call this process {\em equilibration}. This may not work if  $S$ is reducible in that it is permuted block triangular, which can for example be tested by the algorithm given in \cite{duff1986direct}. 
In the reducible case the complementary system discussed below can be decomposed into several subsystems, to which our solution techniques can be applied successively. Consequently, we may assume from now on without loss of generality that the sparsity pattern of $S$ is irreducible, which also implies $\rho_0^s(S) > 0$.       
Alternatively, we can scale by the left Perron-Frobenius vector $\tilde d$ of $|S|$ to achieve $  \|\tilde D^{-1} S \tilde D \|_1 \, = \,  \rho(|S|)$ for 
$\tilde D = \mathbf{diag}(\tilde d)$, but that appears to be of little help here.

\subsection*{The complementary system}
We will assume throughout that $J$ is nonsingular, hence, that $S$ is well defined and that a suitable scaling was applied to make some norm 
$\|S\|_p$ small, if not necessarily less than one.  So far we have looked at \eqref{absnormal} as a system that defines a unique $z \in \R^s$ and thus a corresponding $y$  for each $x \in \R^n$ via the first set of $s$ triangular equations. Now suppose we have given a fixed target value $y$, which we can subsume into $b$, and compute for each $z$ the corresponding value 
\begin{equation}
 x \; = \; x(z)\; \equiv   - J^{-1}(b + Y |z|)    \label{ztox}  \; . 
 \end{equation} 
Substituting this result into the first equation we obtain for $z$ the PL system 
\be   H(z) \; \equiv \;   z  - L | z|   + Z J^{-1} Y |z| \; = \; (I - S \Sigma ) z  \; = \;  \hat c \; \equiv \;   c  - ZJ^{-1}b \label{unfolded}   \; .\ee
Provided $S$ has the inverse $A$ we may write equivalently 
\be H(z) \; = \; z - S |z| \; = \hat c    \quad \iff \quad A \, z  - |z| \; = \; \hat b \equiv A\, \hat c   \; .  \ee
Here the right hand side represents the absolute value equation of Mangasarian and Mayer \cite{mama}. They make the interesting observation that if $A$ is sufficiently small then only strictly negative rights hand sides $\hat b$ lead to solutions. Moreover, according to their Proposition 6 these inverse image sets attain all possible $2^n$ sign combinations, as is   obvious for the limiting case $-|z| = \hat b$, where $A$ vanishes.  Intuitively it would seem that such complete domination of the smooth part by the nonsmooth part makes little sense in a realistic model. Correspondingly,  Mangasarin and Mayer also consider the situation where $A$ is sufficiently large or in our formulation $S$ is sufficiently small, e.g. in the sense of smooth dominance.      
 
Note that the generalized Jacobians $(I - S\,\Sigma)$ of the complementary vector function $H(z)$ all have the same determinant sign if and only if $\rho_0^s(S) <1$, 
which we encountered as a sufficient condition for the coherent orientation of $F$. Generally, $F(x)$ must be coherently oriented if this is true for $H(z)$, but the converse implication is usually not true. The reason is that while all possible sign combinations of $z$ arise in the domain $\R^s$ of $z$, the switching variables $z = z(x)$ are typically restricted to a Lipschitzian submanifold in $\R^s$ as $x$ ranges over $\R^n$.    

Conversely, for any given $z$ solving the lower part of  \eqref{absnormal} for $x$ yields the corresponding value 
\begin{equation}
 z \; = \; z(x)\; \equiv   G^{-1}(c + Z x)  \quad \mbox{with}  \quad G(z) \; \equiv \; z - L |z| .    \label{xtoz}  
 \end{equation} 
As stated by Lemma \ref{perronfrob}  we can make any $p$-norm  $\|L\|_p$ of the strictly lower triangular matrix $L$ as small as possible and in particular smaller than $1$. Then the existence of $G^{-1}$ follows not only from the triangularity of $L$ but also the Banach fixed point theorem. Now we can observe that solutions of the original problem OPL and the complementary problem CPL correspond to each other. 
\begin{lemma}[One-to-one solution correspondence\label{1-1}] ~\\ 
Under our general assumptions with $\det(J) \neq 0$ a point   \(x_\ast \in \R^n\) is a solution of the OPL \(F(x) = 0\) 
if and only if it is a fixed point of $x(z(x))$, which is in turn equivalent to $z_*=z(x_*)$ being a fixed point of $z(x(z))$ and equivalently a solution of the CPL
 \(H(z)  =\hat c\) .  
\end{lemma}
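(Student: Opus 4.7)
The plan is to chain together three equivalences, each obtained by direct substitution using the definitions~\eqref{ztox} and~\eqref{xtoz}, together with the assumption $\det(J) \neq 0$ and the well-definedness of $G^{-1}$ guaranteed by the strict triangularity of $L$.

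First, I would verify that $F(x_\ast)=0$ is equivalent to $x_\ast = x(z(x_\ast))$. Writing out $F(x)=b+Jx+Y|z(x)|$ from the second block row of~\eqref{absnormal}, the equation $F(x_\ast)=0$ rearranges, after premultiplication by $-J^{-1}$, to $x_\ast = -J^{-1}\bigl(b+Y|z(x_\ast)|\bigr)$, which is exactly $x_\ast = x(z(x_\ast))$ in the notation of~\eqref{ztox}. Second, I would observe that fixed points of $x\mapsto x(z(x))$ and fixed points of $z\mapsto z(x(z))$ correspond bijectively: if $x_\ast = x(z(x_\ast))$ and we set $z_\ast := z(x_\ast)$, then applying $z(\cdot)$ to both sides of the fixed-point equation yields $z_\ast = z(x(z_\ast))$; conversely, given a fixed point $z_\ast$ of $z(x(\cdot))$, the point $x_\ast := x(z_\ast)$ satisfies $z(x_\ast)=z_\ast$ and hence $x_\ast = x(z_\ast) = x(z(x_\ast))$.

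Third, I would show $z_\ast = z(x(z_\ast))$ is the same as $H(z_\ast)=\hat c$. Using~\eqref{xtoz} this fixed-point equation reads $G(z_\ast) = c + Z\,x(z_\ast)$, i.e.\ $z_\ast - L|z_\ast| = c + Z\bigl(-J^{-1}(b+Y|z_\ast|)\bigr) = \hat c - ZJ^{-1}Y|z_\ast|$. Rearranging and recalling $S = L - ZJ^{-1}Y$ from Lemma~\ref{nonsingularschur} gives $z_\ast - S|z_\ast| = \hat c$, which is exactly $H(z_\ast)=\hat c$ as in~\eqref{unfolded}.

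There is no real obstacle here; the content of the lemma is essentially a bookkeeping statement about the equivalence of two block-eliminations of the abs-normal system~\eqref{absnormal}. The only point to watch is that the two auxiliary maps $x(\cdot)$ and $z(\cdot)$ must be well defined globally, which is ensured by $\det(J)\neq 0$ for $x(z)$ and by the unique solvability of $G(z)=c+Zx$ (triangular forward substitution) for $z(x)$.
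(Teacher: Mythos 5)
Your proposal is correct and follows essentially the same route as the paper's own proof: a chain of equivalences obtained by substituting the definitions of $x(z)$ from \eqref{ztox}, $z(x)=G^{-1}(c+Zx)$ from \eqref{xtoz}, and the Schur complement $S=L-ZJ^{-1}Y$, relying on $\det(J)\neq 0$ and the triangular solvability of $G$. The only difference is presentational — you separate the chain into three labelled steps and spell out the fixed-point correspondence between $x(z(x))$ and $z(x(z))$ that the paper states in a single line — which does not change the substance of the argument.
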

\begin{proof}
We have the equivalences $F(x)=0$  
\begin{eqnarray*}
   & \iff & x  = -J^{-1}[b +Y |z| ]   \quad \mbox{with} \quad    z = c + Z x + L |z| \\
& \iff  &  \;  x = -J^{-1}[b +Y |z| ]    \quad \mbox{with} \quad   G(z) = c + Z x \\
&   \iff &   \;  x = -J^{-1}[b +Y \left | G^{-1}(c + Z x )\right | ] \\
& \iff   & \;  x = x(z(x))    \;  \iff \;   z = z(x(z))  \\
& \iff & \; z =  G^{-1}(c + Z x)  \quad \mbox{with}  \quad x = - J^{-1}(b + Y |z|)  \\
& \iff & \; z =  G^{-1}(c -  Z  J^{-1}(b + Y |z|))  \\
& \iff & \; G(z) =  c -  Z  J^{-1}(b + Y |z|)  \\
& \iff  &\; z - L |z|  =  c -  Z  J^{-1}(b + Y |z|)  
\end{eqnarray*}
which is equivalent to  $ H(z) = \hat c  $ defined in \eqref{unfolded} as asserted. 
 \end{proof} 
We may interpret $H(z)$ as a simply switched  PL function in abs-normal form with $z \equiv x, Z =I =J, L=0$, and $Y= -S$. The Schur complement is then again 
$0- I\ I^{-1}(-S) = S$, which was to be expected. Since the LIKQ condition is  satisfied, the complementary function $H(z)$ is always bijective if and only if it is 
open, which happens exactly when $\rho_0^s(S) <1$.      

\section{Solving the complementary system CPL}   
In view of Lemma \ref{1-1} we can hope that the largely equivalent fixed point iterations $x^+=x(z(x))$ and $z^+=z(x(z))$ defined by \eqref{xtoz} and \eqref{ztox} lead to convergence.  
As it turns out it is a little easier to establish convergence of the coupled iteration with respect to the $z$-component and the $x$-component must then converge to its own fixed point by continuity. 
\begin{proposition} The Block Seidel iteration $z^+ \; = \; z(x(z)) $  converges from all $z_0$ to the unique fixed point $z_*$ if in some p-norm 
$$  \|S-L\|_p + \|L\|_p  \; < \; 1 \; .  $$ 
Moreover, the corresponding  $x_* = - J^{-1}(b+Y|z_*|)$ is the unique root of $F(x)=0$. 
\end{proposition}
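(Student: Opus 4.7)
The plan is to recognize the iteration as a composition of two implicit maps and to derive a contraction estimate directly from the defining equations, then apply the Banach fixed point theorem and invoke Lemma \ref{1-1} for the correspondence with roots of $F$.

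First I would rewrite $z^+ = z(x(z))$ in a form that exposes the role of $L$ and $S$. Substituting \eqref{ztox} into \eqref{xtoz} and using the Schur complement identity $-ZJ^{-1}Y = S - L$, I would show that the iteration is defined implicitly by
\begin{equation*}
 z^+ - L\,|z^+| \; = \; \hat c + (S-L)\,|z| \, ,
\end{equation*}
which is solvable uniquely for $z^+$ by the triangularity of $L$ (and, if one prefers a norm-based argument, by $\|L\|_p < 1$ together with the Banach fixed point theorem applied to the inner equation $w = L|w| + (\hat c + (S-L)|z|)$).

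Next, the main step. Denoting the map $z \mapsto z^+$ by $\Phi$, I would take two iterates $\Phi(z^1)$ and $\Phi(z^2)$ and subtract their defining relations to obtain
\begin{equation*}
 \Phi(z^1) - \Phi(z^2) \; = \; L\bigl(|\Phi(z^1)|-|\Phi(z^2)|\bigr) + (S-L)\bigl(|z^1|-|z^2|\bigr).
\end{equation*}
Taking $p$-norms on both sides and using the elementary componentwise bound $\bigl||a_i|-|b_i|\bigr| \le |a_i - b_i|$ together with monotonicity of the $p$-norm gives
\begin{equation*}
 \|\Phi(z^1) - \Phi(z^2)\|_p \; \le \; \|L\|_p\,\|\Phi(z^1)-\Phi(z^2)\|_p + \|S-L\|_p\,\|z^1-z^2\|_p.
\end{equation*}
Rearranging and using the hypothesis $\|L\|_p < 1$ yields the Lipschitz constant
\begin{equation*}
 \mathrm{Lip}(\Phi) \; \le \; \frac{\|S-L\|_p}{1-\|L\|_p} \; < \; 1,
\end{equation*}
where the strict inequality is precisely the assumption $\|S-L\|_p + \|L\|_p < 1$. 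Hence $\Phi$ is a strict contraction on $\R^s$ in the $p$-norm.

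Banach's fixed point theorem then provides a unique fixed point $z_\ast = \Phi(z_\ast)$ to which the iterates $z^k$ converge from any starting point $z_0$. By Lemma \ref{1-1}, the corresponding $x_\ast = -J^{-1}(b + Y|z_\ast|)$ is a root of $F$, and any other root would give, via $z(x)$, another fixed point of $\Phi$, contradicting uniqueness. I expect the only subtle point to be the well-posedness of the implicit step (solvability for $z^+$), but since $L$ is strictly lower triangular this is immediate by forward substitution and is reinforced by $\|L\|_p < 1$; everything else reduces to the standard manipulations of norms outlined above.
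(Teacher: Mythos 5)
Your proposal is correct and follows essentially the same route as the paper: both arguments reduce the iteration to the relation $z^+ - L|z^+| = \hat c + (S-L)|z|$, extract the contraction factor $\|S-L\|_p/(1-\|L\|_p) < 1$ (you by subtracting the implicit defining relations and rearranging, the paper by composing the Lipschitz constants of $G^{-1}$ and $R$, which is the same computation), and conclude with the Banach fixed point theorem. Your appeal to Lemma \ref{1-1} for the uniqueness of the root $x_*$ is a slightly more explicit version of the paper's closing remark and is perfectly adequate.
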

\begin{proof} 
Since for any pair $z,\bar z \in \R^s$ by the inverse triangle inequality 
$$ \|G(z)-G(\bar z)\|_p  \; = \;  \| (z-\bar z) - L (|z|-|\bar z|)  \|_p \; 
  \geq  \|z-\bar z\|_p (1 -  \|L\|_p ) $$
the inverse $G^{-1}$ has the Lipschitz constant $1/ (1 -  \|L\|_p |) $. The Lipschitz constant of the map $R(z) \equiv \hat c  - Z J^{-1}Y |z| $ is simply $\| Z \, J^{-1} \, Y\|_p$, which can be expressed in terms of the Schur complement as $\|S-L\|_p$. Using the multiplicativity of Lipschitz constants  we derive for the fixed point iteration $z(x(z))=G^{-1}\circ R(z)$ 
$$ \sup_{z \neq \bar z}  \frac{\|G^{-1}\circ R(z)- G^{-1}\circ R(y)\|_p}{\|\bar z-z\|_p}
 \; \leq \;  \frac{\| Z \, J^{-1} \, Y\|_p}{ 1 -  \|L\|_p } \; = \;  \frac{\|S - L\|_p}{ 1 -  \|L\|_p } \; . $$
Since the last upper bound is less than $1$ exactly when the assumption of the proposition is satisfied, convergence follows again by Banach's fixed point theorem. The last assertion holds by substitution of $(x_*,z_*)$ into \eqref{absnormal}. 
\end{proof} 
\subsection*{Modulus Algorithm}
It follows immediately from the triangle inequality that the fixed point iteration can only be guaranteed to converge when the problem is at least smoothly dominant in that 
$\|S\|_p< 1$. Under that somewhat weaker condition one may apply the simpler fixed point iteration
\be  z^{+} \; = \; \hat H (z) \; \equiv \hat c + S |z| \label{18a}  \; . \ee
Here no triangular substitution process is needed and $S$ may or may not be formed explicitly. If not, we have to just solve one linear system in $J$ at each iteration and multiply vectors by the matrices $Y, Z$ and $L$.  A lack of smooth dominance may then only be discovered by nonconvergence. 
 This simple fixed point iteration was introduced as modulus algorithm in
Theorem $10$ on page $72$ of \cite{van1981piecewise} and spawned the development of many
variations ( see e.g. \cite{hadj} and citations). 
 We restate the basic convergence result. 
\begin{proposition}
If the abs-normal form of $F$ is smoothly dominant in that $\rho = \|S\|_p < 1$, then the iteration \eqref{18a} converges for all $\hat c$ 
 from any $z_0$ to the unique solution $z_* = H^{-1}(\hat c)$. 
 \end{proposition}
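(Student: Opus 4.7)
The plan is to apply the Banach fixed point theorem directly to the map $\hat H(z) = \hat c + S|z|$ on $(\R^s, \|\cdot\|_p)$. The entire space $\R^s$ is a complete metric space, and the fixed point equation $z = \hat H(z)$ is exactly $z - S|z| = \hat c$, i.e., $H(z) = \hat c$ in the notation of \eqref{unfolded}. So once contractivity is established, existence, uniqueness, and convergence from every starting point $z_0$ all follow in a single stroke, and the unique fixed point must be $z_\ast = H^{-1}(\hat c)$ guaranteed by Lemma \ref{1-1}.

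First I would verify that the componentwise absolute value is $1$-Lipschitz with respect to any $p$-norm. Concretely, for every pair $z, \bar z \in \R^s$ one has the componentwise inequality $\bigl||z_i| - |\bar z_i|\bigr| \leq |z_i - \bar z_i|$, and since every $p$-norm is monotone in componentwise moduli, this implies $\||z| - |\bar z|\|_p \leq \|z - \bar z\|_p$. This is the one structural fact about the nonsmooth part that makes the whole argument work.

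Next I would estimate, for arbitrary $z, \bar z \in \R^s$,
\begin{equation*}
\|\hat H(z) - \hat H(\bar z)\|_p = \|S(|z| - |\bar z|)\|_p \leq \|S\|_p \cdot \||z| - |\bar z|\|_p \leq \rho \, \|z - \bar z\|_p,
\end{equation*}
using submultiplicativity of the induced $p$-matrix norm together with the previous step. Since $\rho < 1$ by hypothesis, $\hat H$ is a strict contraction on all of $\R^s$.

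Invoking the Banach fixed point theorem then yields a unique fixed point $z_\ast \in \R^s$ of $\hat H$ together with convergence of the iteration $z^{k+1} = \hat H(z^k)$ from every initial $z_0$, with geometric rate $\rho$. Rewriting $z_\ast = \hat c + S|z_\ast|$ as $(I - S\Sigma_\ast)z_\ast = \hat c$ with $\Sigma_\ast = \diag(\sign(z_\ast))$ shows $H(z_\ast) = \hat c$, so $z_\ast = H^{-1}(\hat c)$. There is no genuine obstacle here; the only subtlety is the monotonicity of $p$-norms used to transport the componentwise Lipschitz bound of $|\cdot|$ into the norm estimate on $\hat H$.
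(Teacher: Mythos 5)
Your proposal is correct and follows essentially the same route as the paper: establish that $\hat H(z)=\hat c + S|z|$ is a contraction with modulus $\rho=\|S\|_p<1$ via the $1$-Lipschitz property of the componentwise absolute value, then invoke the Banach fixed point theorem and identify the fixed point with the solution of $H(z)=\hat c$. The only difference is that you make the monotonicity argument behind $\||z|-|\bar z|\|_p\le\|z-\bar z\|_p$ explicit, which the paper uses implicitly.
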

 \begin{proof} 
To prove contractivity of $\hat H$ on $\R^s$ we note that 
$$  \|\hat H(z)-\hat H(\tilde z)\|_p \; = \; \|  S (|z| -|\tilde z|) \|_p \; \leq \; \|  S \|_p \| |z| -|\tilde z| \|_p \; \leq \; \rho \|| z -\tilde z| \|_p \; =  \; \rho \| z -\tilde z \|_p . $$
Thus, the Banach fixed point theorem ensures  linear convergence to a unique root with monotonically declining error norm  $\|z - z_*\|_p$.  
 \end{proof}
To verify that coherent orientation is not sufficient for the fixed point iteration to converge we applied it to the example  \(S = S_n\) from (\ref{eqn:RumpI}) for $n=1000$
with  \(\mathbf{c} = (\sin(i))_{i=1\dots n}\) and \( z_0 = \mathbf{0} \in \R^n\). Then \( z^+ = \hat c + S\lvert z\rvert \) diverges immediately. 
Whether there can be convergence of the fixed point iteration from generic starting points without smooth dominance is not yet clear.

\subsection*{Generalized Newton on CPL}
The convergence of the fixed point iterations is quite reliable, but may be asymptomatically rather slow.  
In particular, neither fixed point iteration promises finite convergence, so we wish to again examine Newton variants.   
Applying the generalized Newton method to $H(z)=\hat c$ we obtain the recurrence 
\begin{equation}
 z^{+} \; =  z - A^{-1} (H(z) - \hat c) , \quad   \mbox{with} \quad A \in  \partial^L H(z)\label{newtonz} \; .
\end{equation}
Since all $A$ now have the simple form $I - S\,  \Sigma$, we obtain as a specialization of Proposition \ref{propalt}  
  \begin{proposition}\label{73}
 If the abs-normal form of $F$ is smoothly dominant such that $\rho = \|S\|_p < 1/3$, then the iteration \eqref{newtonz} converges for all $\hat c$ 
 in finitely many iterations   from any $z_0$ to the unique solution $z_* = H^{-1}(0)$. Moreover,  the p-norms of both $z-z_*$ as well as $H(z)-\hat c$ are monotonically reduced
 \end{proposition}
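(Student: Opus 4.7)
The plan is to realize this as a direct specialization of Proposition \ref{propalt} (in the error-reduction form \eqref{errorred}) applied to the simply switched CPL function $H$ rather than $F$. Since $H$ is in abs-normal form with $L=0$, Proposition \ref{genjacprop} tells us that on every open cell $P_\sigma$ in $z$-space the Jacobian of $H$ is exactly $A_\sigma = I - S\,\Sigma$, with $\Sigma = \diag(\sigma)$ and $\sigma \in \{-1,+1\}^s$. Hence the limiting Jacobians of $H$ form the finite family $\{\,I - S\,\Sigma : \Sigma = \diag(\sigma),\ \sigma\in\{-1,+1\}^s\,\}$, and the Newton iteration \eqref{newtonz} is nothing but the specialization of the scheme in Proposition \ref{propalt}.

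Next I would establish the uniform contractivity bound. For any two open signatures $\sigma,\tilde\sigma$, a short algebraic manipulation gives
\begin{equation*}
I - A_\sigma^{-1} A_{\tilde\sigma} \;=\; A_\sigma^{-1}\bigl(A_\sigma - A_{\tilde\sigma}\bigr) \;=\; (I-S\,\Sigma)^{-1}\,S\,(\tilde\Sigma - \Sigma).
\end{equation*}
Since $\Sigma$ is diagonal with entries $\pm 1$, we have $\|\Sigma\|_p = 1$ in any standard $p$-norm, so the Banach perturbation lemma yields $\|(I-S\Sigma)^{-1}\|_p \le 1/(1-\rho)$. Further, $\tilde\Sigma - \Sigma$ is diagonal with entries in $\{-2,0,2\}$, giving $\|\tilde\Sigma-\Sigma\|_p \le 2$. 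Combining these,
\begin{equation*}
\|I - A_\sigma^{-1} A_{\tilde\sigma}\|_p \;\le\; \frac{2\,\rho}{1-\rho} \;\equiv\; \bar\rho ,
\end{equation*}
and the hypothesis $\rho < 1/3$ is exactly what is needed to ensure $\bar\rho < 1$. An analogous identity $I - A_{\tilde\sigma}A_\sigma^{-1} = S(\tilde\Sigma-\Sigma)(I-S\Sigma)^{-1}$ gives the same bound $\bar\rho$ on the residual side \eqref{residred}.

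With both \eqref{errorred} and \eqref{residred} verified uniformly in $\sigma,\tilde\sigma$, Proposition \ref{propalt} applies verbatim to $H$: the iteration reaches the unique root $z_\ast$ of $H(z)=\hat c$ in finitely many steps from any $z_0$, and the mean-value recurrences in its proof give, step-by-step, the factor-$\bar\rho$ contraction of both $\|z-z_\ast\|_p$ and $\|H(z)-\hat c\|_p$, which is the asserted monotonic reduction. Uniqueness of $z_\ast$ and equivalence to the OPL solution $x_\ast = -J^{-1}(b+Y|z_\ast|)$ then come from Lemma \ref{1-1}.

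The only nontrivial step is the algebraic manipulation that isolates the telescoping factor $S(\tilde\Sigma - \Sigma)$ so that the factor of $2$ (rather than $2\rho$) can be pulled out cleanly; everything else is a mechanical invocation of Proposition \ref{propalt}. I would expect the sharpness of the $1/3$ threshold, which is intrinsic to this type of telescoping, to be the only quantitative subtlety worth commenting on.
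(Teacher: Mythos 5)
Your proposal is correct and follows essentially the same route as the paper's own proof: it bounds $\|(I-S\Sigma)^{-1}S(\Sigma-\tilde\Sigma)\|_p \le 2\rho/(1-\rho) < 1$ (which is exactly the condition $\rho < 1/3$) and then invokes Proposition \ref{propalt} with the limiting Jacobians $I-S\Sigma$ of the simply switched function $H$. The only difference is that you spell out the algebraic identity and both the error- and residual-side bounds explicitly, which the paper compresses into a single line.
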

 \begin{proof}
 We simply need to bound the norm of $J_\sigma^{-1}(J_\sigma - J_{\tilde{\sigma}})$ according to \\
 $ \|(I- S \Sigma)^{-1} S (\Sigma - \tilde \Sigma) \|_p \leq \|(I- S \Sigma)^{-1}\|_p \|S\|_p \| \Sigma - \tilde \Sigma\|_p \leq 2 \rho/(1- \rho) < 1$. 
 \end{proof}
 Since $H$ is simply switched all generalized Jacobians in $\partial^C \! H(z)$ are  by Proposition \ref{genjacprop}  of the form $(I- S \, \hat \Sigma) $ with $\hat \Sigma =  \mathbf{diag}(\hat \sigma)$ for some $\hat \sigma \in  [-1,1]^s$. Then we have still  $|z|   = \hat \Sigma z$, which is equivalent to $\hat \sigma_i z_i \geq 0$ for $i= 1\ldots s$. 
 Hence the previous proposition applies also if in \eqref{newtonz} the matrix $A$ is chosen as an arbitrary element of the set $\partial^C \! H(\tilde z)$, which contains only nonsingularar matrices. 
 
 \noindent    
Substituting $ A = I - S\, \hat \Sigma $ into  \eqref{newtonz} one finds that
 \be  z^+ \; = \; (I-S  \, \hat \Sigma)^{-1}  c  \quad \mbox{with} \quad   |z| = \hat \Sigma \,  z  \label{newtonz2} \ee
 which means that the generalized Newton iterate $z^+$  depends only on $\sign(z)$. 
 If $z$ is definite in that it contains no zero components we must have $\hat \sigma = \sign(z)$ and $z^+$ is uniquely determined.   
 For generic $\hat c$ the $2^n$ possible images $z^+$ defined by a definite $z$ will also be definite, as we will assume for the time being. 
   
 Then we may interpret the Newton iteration as a 
 finite automaton with the transition function $\sigma^+ = N(\sigma)$ on the state set ${\cal V} \equiv \{-1,1\}^n \equiv \{-,+\}^n$. 
  We can also regard the $\cal V$ as the vertex set  of a directed graph with the edges $(\sigma, N(\sigma))$. An example with $n=3$  is shown in Fig.   \ref{newtgraph}. It is a special case of an example used later in Proposition 7.7.
 
 \begin{figure}[htbp]
 \begin{center}
\resizebox{.66\linewidth}{!}{
\includegraphics{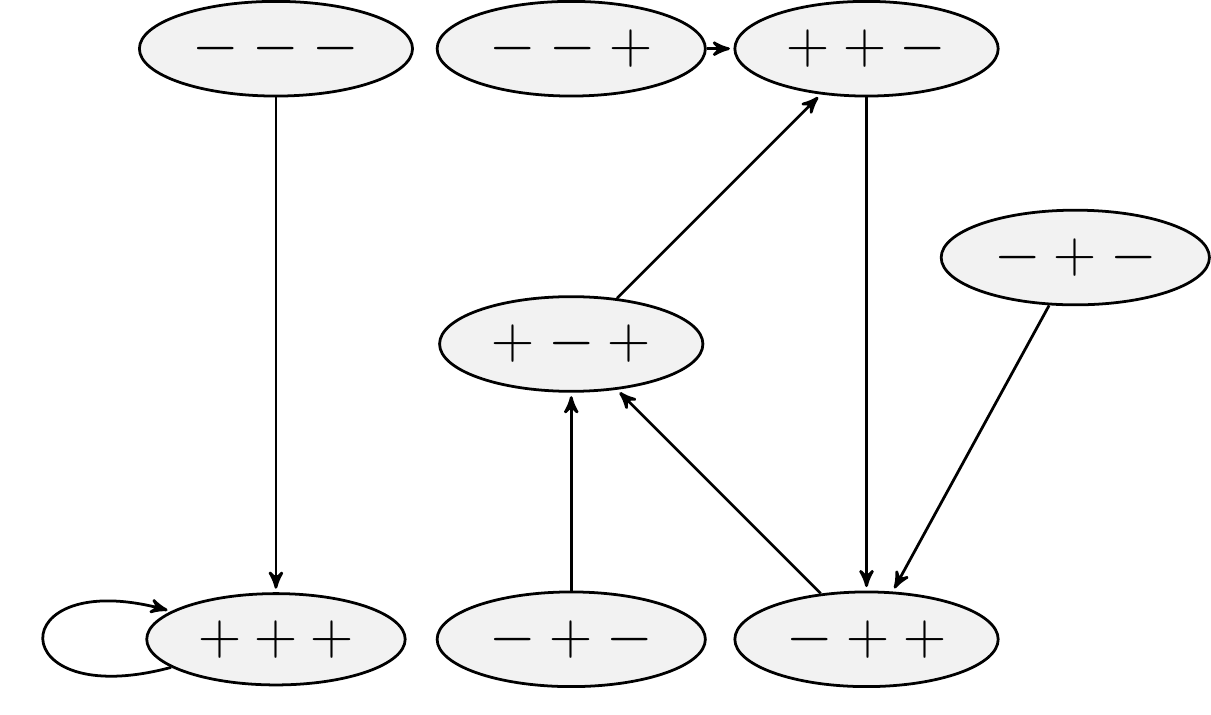}
}
\caption{Transition Graph of Newton's method on complementary system}
\label{newtgraph}
\end{center}
\end{figure}
\noindent
Since all vertices $\sigma$ in the directed graph $\cal G$ have a unique outgoing arc $(\sigma, N(\sigma))$ its structure is rather simple. Depending on the initial point one Newton's method either converges in finitely many steps or begins to cycle.  
\begin{proposition}\label{74}
Each connected component of the transition graph $\cal G$ contains a cycle of length greater than \(1\) or a unique fixed point, which is a cycle of \mbox{length \(1\)}.  
\end{proposition}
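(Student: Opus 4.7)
The plan is to read the statement as a standard fact about \emph{functional graphs}: since the transition function $N$ assigns to every vertex $\sigma \in \mathcal{V} = \{-,+\}^n$ exactly one out-neighbour, $\mathcal{G}$ has out-degree $1$ at every vertex, and in any such finite graph each weakly connected component contains exactly one directed cycle. The genericity assumption on $\hat c$ already in force (which guarantees that $z^+$ in \eqref{newtonz2} is itself definite, so $N$ really is a map $\mathcal{V} \to \mathcal{V}$) is what allows us to invoke this structural fact.

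First I would establish existence of a cycle in each component. Because $\mathcal{V}$ has only $2^n$ elements, the forward orbit $\sigma, N(\sigma), N^2(\sigma), \ldots$ of any vertex must eventually revisit some state, and the first such repetition exhibits a directed cycle $C(\sigma) \subseteq \mathcal{V}$ into which the orbit feeds. In particular every connected component contains at least one cycle, and, should that cycle have length one, it is by definition a fixed point of $N$.

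Next I would prove the uniqueness assertion — i.e.\ that the cycle in each component is the only one — by showing that the "attractor-of-the-orbit" map $\sigma \mapsto C(\sigma)$ is constant on connected components. The key observation is that whenever $N(\sigma) = \sigma'$, the orbits of $\sigma$ and of $\sigma'$ coincide from the first iterate onwards, so $C(\sigma) = C(\sigma')$. Consequently the pre-image sets (the \emph{basins}) $\{\sigma \in \mathcal{V} : C(\sigma) = C\}$ are closed both under $N$ and under taking $N$-preimages. Since any directed edge of $\mathcal{G}$ therefore joins two vertices in the same basin, no two basins can share even an undirected edge, so the basins coincide with the connected components. As every basin contains, by construction, exactly one cycle, uniqueness follows.

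I do not expect any serious obstacle: the statement is purely combinatorial and follows from the finiteness of $\mathcal{V}$ together with the out-degree-one property of $\mathcal{G}$. The only point that requires some care is the implicit standing assumption that $N$ is single-valued on all of $\mathcal{V}$, which was noted before the proposition (degenerate signatures where $z^+$ has a zero component can be excluded under the genericity hypothesis on $\hat c$). With that caveat in place, the argument reduces to the textbook "$\rho$-shape" picture of orbits in a finite deterministic dynamical system.
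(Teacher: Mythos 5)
Your proposal is correct and follows essentially the same route as the paper: exploit the out-degree-one (functional graph) structure and finiteness of $\{-1,1\}^n$ to get an attracting cycle for each orbit, then show the basin $\{\sigma : C(\sigma)=C(\sigma_0)\}$ admits neither incoming nor outgoing edges, so basins coincide with connected components and each contains exactly one cycle or fixed point. Your explicit remark that the genericity of $\hat c$ is what makes $N$ single-valued matches the standing assumption the paper states just before the proposition.
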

\begin{proof}  
From any initial $\sigma_0$ the sequence of iterations $\sigma_k = N^k(\sigma_0)$ stays in the connected component of $\sigma_0$ and must reach a fixed point or begin to cycle.
Let $C(\sigma_0)$ denote the set of vertices that are touched infinitely often by this sequence. Let $Prec(C(\sigma_0))$ denote the set of all $\sigma \in {\cal G}$ with 
$C(\sigma)=C(\sigma_0)$. We now have to exclude that  the connected subgraph $Prec(C(\sigma_0))$ has outgoing or incoming edges. There can be no incoming edges because repeatedly 
applying $N$ to their origins would also lead to $C(\sigma_0)$. Also there can be no outgoing edges because their origins would lead to a cycle or fixpoint outside 
$Prec(C(\sigma_0))$. This completes the proof. 
\end{proof} 
While the condition $\rho = \|S\|_p < 1/3$ used in Prop. \ref{73} excludes cycling it does seem rather strong.  
Alternatively, we may impose the condition $\rho(|S|) < 1/2$, which allows us to  prove  finite termination
 and even limit the computational effort to $n^3/3$ fused multiply adds. 
   \begin{proposition}\label{75}
 Let the Schur complement $|S|$ be absolutely contractive  with $\rho = \rho(|S|) < 1/2$ or  $\rho = 1/2$ and $S$ irreducible. 
 Then for all $\hat c$  any iteration \eqref{newtonz2} converges  in at most $s$ iterations from any $z_0$ to the unique solution $z_* = H^{-1}(c)$. 
 \end{proposition}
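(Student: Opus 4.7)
\bigskip
\noindent\textbf{Proof plan.}
The plan is to show that finite termination follows from a monotone-improvement property on the signature vector. Define the \emph{agreement set}
\[
G(z) \;\equiv\; \{\, i \in \{1,\dots,s\} \;:\; \sigma_i(z) = \sigma_{*,i}\,\},
\]
where $\sigma_* = \sigma(z_*)$. The goal is to prove that along any iteration \eqref{newtonz2} we have $G(z) \subseteq G(z^+)$, with strict inclusion whenever $z \neq z_*$. Since $|G(\cdot)|$ is bounded by $s$ and $G(z)=\{1,\dots,s\}$ forces $z=z_*$ by \eqref{newtonz2}, this immediately yields termination in at most $s$ steps.

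First, I would derive the error recursion by subtracting $z_* = (I-S\Sigma_*)^{-1}\hat c$ from $z^+ = (I-S\Sigma)^{-1}\hat c$ and inserting a telescoping factor:
\[
z^+ - z_* \;=\; (I-S\Sigma)^{-1}\, S\, (\Sigma - \Sigma_*)\, z_*.
\]
For $i \in G(z)$ we have $(\Sigma - \Sigma_*)_{ii}=0$, while for $i \notin G(z)$ one has $\sigma_i = -\sigma_{*,i}$ and therefore $(\sigma_i - \sigma_{*,i})\, z_{*,i} = -2|z_{*,i}|$. Hence $(\Sigma-\Sigma_*)z_* = -2 w$ where $w \ge 0$ is supported on the bad indices $\bar G(z) = \{1,\dots,s\}\setminus G(z)$ with $w_i = |z_{*,i}|$ there.

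Second, I would invoke Perron-Frobenius scaling via Lemma~\ref{perronfrob}: choose $d > 0$ with $|S|d = \rho\, d$ (in the boundary case $\rho=1/2$, irreducibility guarantees such a strictly positive eigenvector); the diagonal similarity $\tilde S = D^{-1}SD$ with $D = \diag(d)$ satisfies $\||\tilde S|\|_\infty = \rho$. Passing to these scaled coordinates and taking componentwise absolute values in the Neumann expansion of $(I-S\Sigma)^{-1}$ gives the sharp bound
\[
|\,z^+ - z_*\,| \;\le\; 2\,(I-|S|)^{-1}|S|\, w \qquad \text{componentwise},
\]
valid since $\rho(|S\Sigma|) = \rho < 1$.

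Third, I would use the identity $|z_*| = \Sigma_*\hat c + |S|\,\Sigma_*\Sigma_* |z_*|$ combined with the support restriction $w \le \chi_{\bar G}\odot |z_*|$ to estimate the right-hand side purely in terms of $|z_*|$ and $\rho$. Applying the P-F eigenvector bound $(I-|S|)^{-1}|S|d = \tfrac{\rho}{1-\rho}\, d$ in the weighted $\infty$-norm yields the componentwise estimate $|z^+_i - z_{*,i}| < |z_{*,i}|$ for every $i \in G(z)$ as soon as $2\rho/(1-\rho) \le 1$, i.e.\ $\rho \le 1/2$. Therefore $\sigma_{*,i}\, z^+_i > 0$ for such $i$, establishing $G(z) \subseteq G(z^+)$.

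Finally, I would prove strict inclusion when $z \neq z_*$: if $G(z^+) = G(z)$, then $\Sigma(z^+) = \Sigma(z)$ and \eqref{newtonz2} gives $z^{++}=z^+$; combined with the previous inclusion, the iteration stalls at a fixed signature $\sigma$ for which $z = (I-S\Sigma)^{-1}\hat c$ satisfies $\Sigma(z) = \Sigma$ and therefore $H(z) = \hat c$, so $z = z_*$ by uniqueness (Lemma~\ref{1-1} plus Proposition~\ref{73}-style uniqueness under $\rho<1$). The boundary case $\rho=1/2$ with $S$ irreducible is exactly where the P-F bound is attained with equality on $d$; there, irreducibility forces the inequality in step three to be strict at every index that lies in $G(z)$ but is coupled to $\bar G(z)$ through $|S|$, preserving the argument.

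The main obstacle is step three: converting the qualitative error bound into the quantitative componentwise inequality $|e^+_i|<|z_{*,i}|$ for each $i \in G(z)$. The difficulty is that the natural bound $w \le |z_*|$ is too crude and loses the factor which is needed to sharpen the constant from the $1/3$ of Proposition~\ref{73} down to $1/2$; one really must exploit that $w$ vanishes on $G(z)$, and combine this with the Perron-Frobenius structure rather than merely with $\|\cdot\|_p$-style operator bounds.
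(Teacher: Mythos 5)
Your proposed invariant is false, and this is a genuine gap rather than a fixable detail. The claim $G(z)\subseteq G(z^+)$ (correct signs are never lost) fails even well inside the hypothesis $\rho(|S|)<1/2$: take $s=2$, $S=\bigl(\begin{smallmatrix}0 & a\\ a & 0\end{smallmatrix}\bigr)$ with $a=0.4$, $\hat c=(1,\,0.1)^\top$. Then $z_*=(1.238\ldots,\,0.595\ldots)^\top>0$, so $\sigma_*=(+,+)$; starting from any $z$ with signature $(-,+)$, i.e.\ $G(z)=\{2\}$, the iterate \eqref{newtonz2} gives $z^+_2=(\hat c_2-a\hat c_1)/(1+a^2)<0$, so index $2$ loses its correct sign and $G(z)\not\subseteq G(z^+)$. (The same happens with $a=0.3$, $\hat c=(1,\,0.25)^\top$, so the monotonicity fails even under $\rho<1/3$.) The failure is exactly the obstacle you flag at the end: your componentwise bound $|z^+-z_*|\le 2(I-|S|)^{-1}|S|\,w$ is compared, via the Perron--Frobenius vector $d$, against $d_i$, but what you need is a comparison against $|z_{*,i}|$, and there is no lower bound on $|z_{*,i}|/d_i$ over $i\in G(z)$; a correct component with moderate $|z_{*,i}|$ that is strongly coupled to a wrong component with large $|z_{*,j}|$ can be pushed across zero. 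In addition, your quantitative step contains a slip: $2\rho/(1-\rho)\le 1$ is equivalent to $\rho\le 1/3$, not $\rho\le 1/2$, so even if the comparison scale were right you would only recover the $1/3$ bound of Proposition \ref{73}, not the claimed $1/2$. Finally, resting uniqueness on ``Proposition \ref{73}-style uniqueness under $\rho<1$'' is circular here, since $\rho(|S|)\le 1/2$ does not imply $\|S\|_p<1/3$; uniqueness in this regime comes from $\rho_0^s(S)\le\rho(|S|)<1$ (bijectivity of $H$), not from the Newton contraction estimate.

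The paper's proof uses a different, and in light of the counterexample necessarily different, invariant. After equilibrating with the Perron--Frobenius vector so that $\|S\|_\infty=\rho\le 1/2$, one looks at the component with the largest right-hand side, $|\hat c_1|=\|\hat c\|_\infty$: since $\|z^+\|_\infty\le\|\hat c\|_\infty/(1-\rho)\le 2|\hat c_1|$, one gets $|z^+_1-\hat c_1|\le\rho\,2|\hat c_1|<|\hat c_1|$ \emph{regardless of which signature was used}, so this one component acquires the sign $\sign(\hat c_1)$ after the first iteration and keeps it forever. That variable is then eliminated by one step of Gaussian elimination, and the reduced Schur complement $\tilde S$ is shown to satisfy $\|\tilde S\|_\infty\le\rho$ again, so the argument recurses: the components lock in one per iteration in an order dictated by the magnitudes of the successively reduced right-hand sides, not by agreement with $\sigma_*$ at the current iterate. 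If you want to salvage your write-up, you would have to replace the agreement-set monotonicity by this pivot-and-reduce invariant (or prove some comparable statement that does not require bounding the error componentwise by $|z_{*,i}|$).
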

 \begin{proof}
 After equilibration by the Perron-Frobenius vector we may assume without loss of generality that  $\rho = \rho(|S|)=\lVert S \rVert_\infty \leq 1/2$.   
 For notational simplicity we drop the superscript $\hat {\;}$ and write $\sigma \in [-1,1]^s$ and $\Sigma = \diag(\sigma)$ with the only restriction  that at the current iterate $z$ we have $|z| = \Sigma z$.  
 
 The argument below will be based on the fact that for $\lVert S\lVert_{\infty}\leq 1/2$ with $S$ irreducible, the inverse $(I-S\Sigma^{-1})$ is strictly diagonally dominant with a positive diagonal. We will prove this statement for $\lVert S\lVert_{\infty}< 1/2$. The limiting case requires a more extensive reasoning, for which we refer to Lemma 4.2. in \cite{radons2014sign}. 
 
 Since $\lVert S\Sigma\lVert_{\infty} \leq \lVert S\lVert_{\infty} \lVert\Sigma\lVert_{\infty} \leq \lVert S\lVert_{\infty}$ it suffices to consider the case $\Sigma = I$: We have 
 $\lVert S^k\lVert_{\infty}\le\lVert S\lVert_{\infty}^k<\frac1{2^k}$ which implies
$\lim_{k\to\infty}S^k=0$. Hence we can express
$(I-S)^{-1}$ via the Neumann series
$$
A^{-1}=\sum_{k=0}^\infty(I-A)^k=\sum_{k=0}^\infty S^k = I+\sum_{k=1}^\infty
S^k.
$$
The inequality $\|\sum_{k=1}^\infty S^k\|_\infty\le
\sum_{k=1}^\infty\|S\|^k_\infty<\sum_{k=1}^\infty\frac1{2^k}=1$
already ensures strict diagonal dominance for $(I-S)^{-1}$.

Now we perform symmetric pivoting by reordering the equations and the  components of $z$ such that the first component $c_1$ of the permuted vector $c$ is its largest, i.e., 
 $|c_1|= \|c\|_\infty$. 
 Note that reorderings  of the equations and variables do not affect the generalized Newton iteration at all.  If $c_1=0$ we must have that $c=0$ and thus $z^+=0$ is obtained as the correct solution from any $z$ in one step.   
 Otherwise we have for the first component of the defining equation 
 $$     | z^+_1 - c_1 |   = | e_1^\top S \,  \Sigma \, z^+ |\;  \leq \;   \| e_1S\|_1 \|z^+\|_\infty  \; \leq  \rho \, 2   |c_1|  <  |c_1|   \; .$$
This ensures that the sign of the first component  $z^+_1$ is the same as that of  $\sigma_1^*  \equiv \sign(c_1) \neq 0$  and we have the crucial identity 
$|z_1^+| = \sigma_1^* z_1^+ . $   
This will remain true over all subsequent iterations since we have so far not imposed any assumptions on the step defining $\sigma$ whatsoever.
Hence we may assume that from the second iteration onwards already  $\sigma_1 = \sigma_1^* $ 
and thus also  $|z_1^+| = \sigma_1 z_1^+$.  
This relation allows us to rewrite the first equation and express it as a linear combination of the other $z_j^+$, namely 
$$ z_1^+(1- \sigma_1^* s_{11})  \; = \;  c_1  + \sum_{j=2}^s s_{1j}\sigma_j z_j^+  \; \implies \;    \sigma_1 z_1^+ = \frac{c_1}{\sigma_1^* - s_{11}}
+ \sum_{j=2}^s \frac{s_{1j} \sigma_j z_j^+}{\sigma_1^* - s_{11}}  \; .$$
Substituting this relation into the other equations, which corresponds to one step of Gaussian elimination, we obtain for $i=2\ldots s$
$$ z_i^+   \; = \; c_i +   \frac{s_{i1} c_1}{\sigma_1^* - s_{11}}   + 
\sum_{j=2}^s \left [s_{ij} + \frac{s_{i1} s_{1j}}{\sigma_1^*-  s_{11}}
 \right] \sigma_j z_j^+    
\; \equiv \;  {\tilde c}_i + \sum_{j=2}^s {\tilde s}_{ij} \, \sigma_j z_j^+  \; .
$$  
Hence we see that the other components $z^+_i$ for $i=2 \ldots s$ are equivalent to the ones that would be obtained on the reduced system with the same restricting for 
picking $\sigma_i$, namely $\sigma_i \, z_i = |z_i|$. The implicitly reduced matrix 
$\tilde S \equiv (\tilde s_{ij})^{i=2\ldots s}_{j=2\ldots s}$ satisfies $\| \tilde S \|_\infty \leq \rho = \| S \|_\infty$ since, for each $i > 1$,
$$  \sum_{j=2}^s |\tilde s_{ij}|  \leq   \sum_{j=2}^s |s_{ij}| +  \frac{|s_{i1}|}{1-\sigma_1^* s_{11}} \sum_{j=2}^s |s_{1j}|  \leq \rho - |s_{i1}| +    
\frac{|s_{i1}| (\rho- |s_{11}| )}{1-\sigma_1^*s_{11}}  \leq \rho - \frac{|s_{i1}|}{2}   \leq \rho  \; .$$
Thus we can repeat the argument and after the second iteration the sign of the $z_i^+$ corresponding to the maximal value of $|\tilde c_i|$ will be correct and nonzero.  
Moreover,  the others will be equivalent to those obtained under the same rule on a doubly reduced $(s-2)\times(s-2)$ system. Eventually the last element  of $z$ will be correctly identified and then all other 
components of the $s$-th generalized Newton iterate must be correct as well.  
 \end{proof}
 \subsection*{Signed Gaussian Elimination} 
The system reduction in the proof of the previous theorem depends only on the sign $\sigma_i^* = \sign(c_i)$ of an absolutely largest RHS component $c_i$ but not the initial guess 
of $z_i$ and a compatible $\sigma_i$.  As we have elaborated on in \cite{radons2014sign}, it can be applied directly to generate a signed Gaussian elimination procedure. Thus we obtain the following corollary:   
 
\begin{corollary}\label{76} If $\rho(|S|) <  \tfrac{1}{2}$ or $\rho(|S|) = \tfrac{1}{2}$ and $S$ irreducible  the unique solution of  the complementary system 
$z = S\,|z|+c$ can be computed by signed Gaussian elimination in at most $s^3/3$ fused multiply add operations plus $O(s)$ divisions. 
\end{corollary}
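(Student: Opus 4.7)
The plan is to extract the recursive reduction established in the proof of Proposition \ref{75} and recast it as a direct elimination scheme, then bound its arithmetic cost.

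First, I would isolate the following observation from that proof: after a symmetric permutation that places an absolutely largest right-hand side component in the first position, the sign $\sigma_1^{\ast} = \sign(c_1)$ of the first solution component is forced by the data alone, independently of any iterate or auxiliary signature. Using $|z_1| = \sigma_1^{\ast}\, z_1$, the first equation becomes a linear relation expressing $z_1$ in terms of $z_2,\dots,z_s$, and substituting it into the other equations yields a reduced complementary system $z' = \tilde{S}|z'| + \tilde{c}$ of order $s-1$ with
\[
\tilde{s}_{ij} \;=\; s_{ij} + \frac{s_{i1} s_{1j}}{\sigma_1^{\ast} - s_{11}}, \qquad \tilde{c}_i \;=\; c_i + \frac{s_{i1} c_1}{\sigma_1^{\ast} - s_{11}} .
\]
The row-sum bound already carried out in Proposition \ref{75}, $\|\tilde{S}\|_\infty \le \rho - |s_{i1}|/2 \le \rho$, certifies that the reduced system still satisfies the standing hypothesis, so the reduction applies inductively until only a $1\times 1$ equation remains.

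Next I would collect the signs $\sigma_1^{\ast},\dots,\sigma_s^{\ast}$ produced along the way; by construction the resulting diagonal $\Sigma^{\ast}$ makes $(I - S\Sigma^{\ast})z = c$ the linear system whose unique solution is the fixed point guaranteed by Proposition \ref{73}. Standard back-substitution through the triangularized form that the elimination leaves behind recovers $z_s^{\ast}, z_{s-1}^{\ast}, \dots, z_1^{\ast}$. For the operation count, the $k$-th reduction acts on a matrix of order $s-k+1$: after one division to form $\alpha_k = 1/(\sigma_1^{\ast} - s_{11})$, each of the $(s-k)^2$ entries of $\tilde{S}$ and each of the $s-k$ entries of $\tilde{c}$ is updated by a single fused multiply-add. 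Summing,
\[
\sum_{k=0}^{s-1} (s-k)^2 \;\le\; \frac{s^3}{3},
\]
with $s$ divisions and the $O(s^2)$ cost of the $s$ pivot searches and of the back-substitution absorbed into the same bound.

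The main obstacle I anticipate is to verify that the hypothesis survives the boundary case $\rho(|S|) = \tfrac{1}{2}$ with $S$ irreducible throughout the reduction, because irreducibility is not automatically preserved by a Schur-type update. I would handle this exactly as in Proposition \ref{75}, invoking Lemma 4.2 of \cite{radons2014sign} to guarantee that the pivot $\sigma_1^{\ast} - s_{11}$ remains bounded away from zero; this makes every division well-defined and ensures that the process terminates correctly after exactly $s$ sign determinations, establishing the claimed $s^3/3$ fused multiply-add plus $O(s)$ division complexity.
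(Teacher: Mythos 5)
Your proposal is correct and takes essentially the same route as the paper, which obtains the corollary precisely by observing that the reduction in the proof of Proposition \ref{75} depends only on the sign of an absolutely largest right-hand side component and can therefore be executed once as a signed Gaussian elimination with the standard $s^3/3$ leading-order fused multiply-add count, the limiting case $\rho(|S|)=\tfrac{1}{2}$ with $S$ irreducible being deferred, as you do, to Lemma 4.2 of \cite{radons2014sign}. Only cosmetic details differ: uniqueness should be credited to smooth dominance after Perron--Frobenius equilibration ($\|S\|_\infty\le\tfrac{1}{2}<1$) or to Proposition \ref{75} itself rather than to Proposition \ref{73} (whose hypothesis $\|S\|_p<1/3$ need not hold), and in the boundary case the delicate point is the strictness of the sign-determining inequality rather than the pivot, since $|\sigma_1^*-s_{11}|\ge 1-\rho\ge\tfrac{1}{2}$ is automatic.
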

Propositions \ref{73} and \ref{75} ensure the finite convergence of the generalized Newton method under the conditions $\rho = \|S\|_p < 1/3$ and 
$\rho = \rho(|S|) \leq 1/2$, respectively. Obviously, the second condition does not imply the former, but the converse does also not hold so that there are problems where only one but not both theorems apply. To demonstrate this we consider the example 
$$       S = 0.3  \, [ I - ee^\top/9 ]     \; \in \; \R^9   \quad \mbox{with} \quad e =(1)_{1\ldots 9} .   \;    $$
Here $S$ is  a scaled elementary reflector so that   $\rho = \|S\|_2 = 0.3 \cdot 1 < 1/3$. However, one can easily check that $\rho(|S|) =  0.3 \cdot 16/9 = 1.6/3 > 0.5 $ so that Proposition \ref{73} applies, but neither Proposition \ref{75} nor its Corollary \ref{76}. 
 
 \subsection*{Divergence of the generalized Newton on Cyclic Example} 
Another question that arises is whether the bound $1/2$ imposed on $\rho = \rho(|S|)$ in Proposition \ref{75} and its corollary could not be weakened. The answer is that for  $s$ of any significant size the bound may only be raised a minute amount above $1/2$ without opening the possibility of divergence. More specifically, we have the following family of counter examples, whose instance for $s=3$ was already depicted in Figure \ref{newtgraph}.       
 
\begin{proposition}
        For \(s > 2\) set  $\hat c = (1)_{1 \ldots s}$ and define  \(S\in\R^{s\times
s}\) as the cyclic T\"oplitz matrix
        \[ S = \begin{bmatrix} \mathbf{0} & a \\ a\,  I_{s-1} & \mathbf{0}
\end{bmatrix} .\]
        Then, if  $a \in \R$ satisfies
        \[ \frac 12 + \frac{1}{2^s} \; \leq \; a  \; \leq  \; \frac{1}{\sqrt{2}} ,\]
        the generalized Newton method cycles between $s$ distinct and definite
points when the initial $z$ contains exactly one negative component and
no zeros.
\end{proposition}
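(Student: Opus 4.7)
The plan is to exploit the cyclic $\mathbb{Z}_s$ symmetry of the problem. Writing $S = aP$ for the cyclic permutation $Pe_i = e_{i+1 \bmod s}$, the identities $PS = SP$ and $P\hat c = \hat c$ give, for any definite $z$ with sign pattern $\sigma$,
\[
  I - S\diag(P\sigma) \; = \; P\,(I - S\diag\sigma)\,P^{-1},
\]
so the Newton map $N(z) := (I - S\diag(\sign z))^{-1}\hat c$ is equivariant under cyclic shifts: $N(Pz) = PN(z)$. Using this, I may assume without loss of generality that the initial signature is $\sigma_0 = (-1,+1,\ldots,+1)^\top$ with the single $-1$ in position $1$, and reduce the entire claim to showing that the first iterate $z^+ := N(z_0)$ is definite with $\sign(z^+) = P\sigma_0 = (+1,-1,+1,\ldots,+1)^\top$. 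Once this one case is settled, induction delivers the $k$-th iterate as $P^{k-1}z^+$, so the orbit returns to $z^+$ after $s$ steps and visits $s$ pairwise distinct definite points, since the lone negative entry sits in a different slot in each $P^{k-1}z^+$.

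Next I would solve $(I - S\Sigma_0)\,z^+ = \hat c$ explicitly. With $\Sigma_0 = \diag(-1,1,\ldots,1)$ the equations become $z_1^+ - a z_s^+ = 1$, $z_2^+ + a z_1^+ = 1$, and $z_k^+ - a z_{k-1}^+ = 1$ for $k = 3, \ldots, s$. Unrolling the latter two yields $z_k^+ = (1-a^{k-1})/(1-a) - a^{k-1} z_1^+$ for $k \ge 2$, and substituting $k = s$ into the first equation gives the closed form
\[
  z_1^+ \; = \; \frac{1-a^s}{(1-a)(1+a^s)}, \qquad z_k^+ \; = \; \frac{1 + a^s - 2a^{k-1}}{(1-a)(1+a^s)} \quad (k = 2, \ldots, s).
\]

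The remaining work is the sign verification under the hypotheses $1/2 + 1/2^s \le a \le 1/\sqrt{2}$. The common denominator is positive, and $z_1^+ > 0$ is immediate since $a < 1$. For $k \ge 3$ the upper bound $a \le 1/\sqrt{2}$ forces $2a^{k-1} \le 2a^2 \le 1 < 1 + a^s$, so $z_k^+ > 0$. The delicate case is $k = 2$: one needs $h(a) := 2a - 1 - a^s > 0$. The derivative $h'(a) = 2 - s a^{s-1}$ is positive throughout $[0, 1/\sqrt{2}]$ because $(2/s)^{1/(s-1)} > 1/\sqrt{2}$ is equivalent to $2^{(s+1)/2} > s$, which holds trivially for every $s \ge 3$; so $h$ is strictly increasing on the allowed range, and it suffices to check the left endpoint, namely $(1 + 1/2^{s-1})^s < 2$. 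This last inequality is the main technical obstacle of the proof; I would handle it by direct computation for $s = 3$ (where $1.25^3 = 1.953 < 2$) and, for $s \ge 4$, by the Bernoulli estimate $(1 + 1/2^{s-1})^s \le e^{s/2^{s-1}} \le e^{1/2} < 2$, using that $s/2^{s-1}$ is decreasing and equals $1/2$ at $s = 4$. With every $z_k^+$ nonzero of the claimed sign, $z^+$ is definite with $\sign(z^+) = P\sigma_0$, and the equivariance reduction from the first paragraph closes the argument.
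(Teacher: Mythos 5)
Your proof is correct and takes essentially the same route as the paper: reduce by the cyclic symmetry to one canonical sign pattern (the paper puts the lone negative entry in the last slot, you in the first), solve the linear system $(I-S\Sigma)z^+=\hat c$ in closed form, and verify the sign conditions, with the binding inequality checked at the left endpoint $a=\tfrac12+2^{-s}$ via monotonicity in $a$ — your endpoint condition $(1+2^{1-s})^s<2$ is exactly the paper's $2^{1-s}<2^{1/s}-1$. Your explicit equivariance $N(Pz)=PN(z)$ and the $1+x\le e^x$ estimate are only cosmetic refinements of the paper's ``w.l.o.g.\ by symmetry'' step and its elementary endpoint bound.
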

\begin{proof}
        Suppose the current approximation \(z = (z_i)_{i=1}^{s}\) consists of only
positive components except for one, say \(z_i < 0\). Then we will show
that the next iterate \(z^+\) has only positive iterates except  for \(
0>z^+_{i^+}\) with \(i^+ \equiv  1\, + \,( i \! \mod \! s )\). This relation
obviously establishes the assertion, since the single negative sign will
cycle infinitely often.
Due to the symmetry of the situation we may assume w.l.o.g. that the last
component of the current iterate  \(z\) is negative. Hence here we have
\(\Sigma(z) = \diag(1,\dots,1,-1)\) and the next iterate \(\zeta = z^+\)
is then the  solution of the system of linear equations,
        \begin{multicols}{2}
        $\quad$\\[-0.8cm]
        \[ \begin{bmatrix} 1 & 0 & \dots & a \\-a & 1 & \dots & 0 \\  & \ddots \\
& &  -a & 1 \end{bmatrix}\begin{bmatrix} \zeta_1 \\ \vdots \\ \\
\zeta_{s} \end{bmatrix} = \begin{bmatrix} 1 \\ \vdots \\ \\ 1
\end{bmatrix} \] 
        Thus in terms of \(\zeta_1\) the other components $\zeta_i$ for \(i =
2,\dots,s\)  are given by
        \vspace*{-0.35cm}
        \[ \zeta_i = 1 + a \,  \zeta_{i-1} = \left(\frac{1-a^{i-1}}{1-a}\right) +
a^{i-1}\zeta_1 \]
        \end{multicols}
        \noindent
        Substituting these expressions into the first line of the system we find
\[ \zeta_1 + a\, \zeta_{s} = 1 \implies  \zeta_1 + a
\frac{(1-a^{s-1})}{(1-a)}+ a^s\, \zeta_1 = 1 \implies \zeta_1 = \frac{1 -
2\, a +a^s}{(1+a^s)(1-a)}   \; .\]
        Now we want to achieve a shift of the negative entry from the last to the
first position during the iteration from \(z\) to \(z^+\). So
        \(\zeta_1\) should become negative and \(\zeta_2\) has to stay positive.
In other words, we have to impose the two conditions \(\zeta_1 < 0\) and
\(\zeta_2 > 0\). From the first one it  follows that
        \[ 0 > 1 - a\frac{(1-a^{s-1})}{(1-a)} \iff \sum_{i=0}^{s-1} a^i > 2  \]
        and the second one is equivalent to
        \[ 0 < \zeta_2 = 1 + a\, \zeta_1 = 1 + \frac{a}{(1+a^s)}\left[1 -
a\frac{(1-a^{s-1})}{(1-a)})\right] \iff 1+a^s > 2a^2. \]
        The last condition is certainly met by all $a  \leq {1}/{\sqrt{2}} < 1$.
        To ensure the first condition $\zeta_1<0$ we substitute   \( a = \frac 12
(1+\Delta a) \) for some $\Delta a \in (0,\sqrt{2}-1)$.
        Clearly, the first condition is monotonic in $a$ and $\Delta a$ so that,
if it holds for the  particular $\Delta a =2^{1-s}$, it must also hold
for all greater values of that problem parameter. Now we obtain after
some elementary manipulations
        \begin{align*}
                \sum_{i=0}^{s-1} \left[\frac 12 (1+\Delta a)\right]^i > 2 &\iff
\frac{1-\left[\frac 12 (1+\Delta  a)\right]^s}{1-\frac 12 (1+\Delta  a)}
> 2  
                &\iff \Delta  a < 2\sqrt[s]{\Delta  a} - 1.
        \end{align*}
        The only thing that remains to be shown is that the last inequality holds
for  \( \Delta a \equiv 2^{1-s}\). For $s=3$ this is easily verified by
direct calculation. For all $s\geq 4$ we obtain the condition
        $$  2 \sqrt[s]{\Delta a}-1  \; = \; 2^{1/s} - 1 \;  \geq \; 1/(2s) \; .$$
Here, the last inequality holds for $s \geq 2$ since the function $2^{1/s}
- 1-  1/(2s)$ of $s$ is positive for $s=2$ and one can easily check by
differentiation that it grows monotonically beyond.  Now all that remains
to be shown is that $ 2^{s-2} \; < \; 1/s$, which  one can check quite
easily to be indeed satisfied for all $s > 3$.  This completes the proof.
\end{proof}

The proposition demonstrates that, at least without additional structural information on $S$, we cannot deduce the convergence of full step generalized Newton when 
 $\rho(|S|) \in [1/2+1/2^n, \sqrt{2}]$. Please note that this divergence-result does not hold for the method outlined in Corollary \ref{76}. 
 

 Also, because our  fixed point iteration and the modulus method  normally yield only linear convergence, it becomes immediately clear that they do not reduce to semi-smooth Newton. Under the assumption of smooth dominance the local convergence result of Qi et al. applies and we must have finite convergence on PL problems whenever convergence occurs at all. Of course, evaluating $\hat H(z)$ is a lot cheaper than solving a system in the Jacobian 
$J_\sigma  =  J  + Y \Sigma(I-L\Sigma)^{-1}Z$ with $\sigma  = \sigma(x)$ and thus $\Sigma = \Sigma(x)$, changing from iterate to iterate. 
While the iteration function  $G$ is Lipschitzian, the not always unique generalized Newton steps $-J_{\sigma(x)}^{-1}F(x)$ may jump discontinuously as a function of $x$. 
Nevertheless, it might be worthwhile to switch to Newton once the signature vector $\sigma$ has been stable for a few iterations.

 It is not too hard to see that (at least when full steps are taken) the generalized Newton iteration on $H(z)$ is equivalent to that applied to the 
 partitioned equation \eqref{absnormal} for fixed $y$. The key numerical effort is solving a linear system in $I - S\, \Sigma$, which is also the key effort in 
 applying the inverse Jacobians $J_\sigma^{-1}$ to any vector. In either case we first need to form the Schur complement $S$, which, at least formally, 
 involves the inverse of the smooth part $J$. If the number $s$ of switching variables is much smaller than $n$, the number of independents, we can of course 
 compute $J^{-1}Y$ or $ZJ^{-1}$ by solving $s$ linear systems in $J$, possibly based on its $LU$ factorization.      
 
When $H(z)$ is injective, the fibres \eqref{fibre} have no bifurcations at all, so tracing them in a piecewise Newton fashion seems a very promising approach.  Naturally, the number of steps is not a priori bounded in any way. 
To see that this is not equivalent to applying piecewise Newton to the original system $F(x)=0$ we note that in the latter case, until the final step, there will always be a nontrivial residual on the lower  equation of \eqref{absnormal}, whereas the upper block will be exactly satisfied.  Conversely, applying piecewise Newton to $H(z)=\hat c$ means that  there will be a residual in the upper block but the lower equation will remain exactly satisfied. Of course, one could also try a mixture  just starting from $(x,z)=(0,0)$ so that all subsequent residuals would be multiples of 
$(c,b)$. The advantages and disadvantaged of these approaches deserve  to be explored in detail.

\subsection*{Reduction to an LCP}
 Decomposing $z=u-w$ with $u \perp w$ in that $u \geq 0 \leq w$ and $u^\top w =0$, we obtain $|z| = u+w$. Substituting this into our basic equation for fixed $y$, and subsuming $y$ into $b$, we obtain 
 \begin{eqnarray}
 \begin{bmatrix} u-w  \\ 0  \end{bmatrix} \;  = \; \begin{bmatrix} c \\  b \end{bmatrix} +   \begin{bmatrix} Z & L \\ J & Y   \end{bmatrix} \; 
\begin{bmatrix} x \\  u+w  \end{bmatrix}   \quad \mbox{with} \quad 0 \leq u \perp w \geq 0\label{reducednorm2}  \; .
\end{eqnarray} 
 Assuming again that the smooth part $J$ is nonsingular we can eliminate $x$ using the second equation and obtain with $S$ the Schur complement as above with the abbreviation $\hat c \equiv  c - Z J^{-1}b $
  $$  u-w  \;  = \; \hat c + S (u+w)    \quad \mbox{with} \quad 0 \leq u \perp w \geq 0 \; .$$
  Assuming furthermore that $I-S$ is nonsingular, which is certainly implied by smooth dominance, we may solve for $u$ and obtain 
  
  \be
  0 \leq  u \;  \equiv \;  q + M \, w \; \,  \perp \; \,  w \geq 0 \label{lcp} \ee
  where
  \be 
   q \; \equiv (I-S)^{-1} \hat c \quad \mbox{and} \quad  M \; \equiv \; (I-S)^{-1} (I+S) \label{defM} \; .
   \ee
 This is a linear complementarity problem in standard form. Of course, in this transformation some sparsity and structure of the original piecewise equation may be lost. Nevertheless, we 
  should keep in mind that, when the  smooth Jacobian $J$ is invertible and the Schur complement $S$ does not  have the eigenvalue $1$, then we are essentially solving a complementarity problem in $s$ variables. 
   If $S-I$ but not $S+I$ is singular we can exchange the roles of $v$ and $w$ to get essentially the same reduction with $M$ being the inverse of its definition above. 
  Rather than eliminating the vector $x$ we could also split it into  complementary positive and negative parts. However, especially  since $J$ can always be made nonsingular using (\ref{trivial}) 
  essentially doubling $x$ would seem to introduce artificial combinatorial complexity.
 Since every solution of our complementary equation $H(z)=\hat c$ corresponds to a solution of the LCP, the latter can be uniquely solved for any vector $q$
 if we have smooth dominance. It is well known \cite{0757.90078} that this is true if and only if $M$ is a P-matrix.  On the other hand, Rump has shown that 
 $\rho_0^s(S) < 1 $ is equivalent to $M$ being a P-matrix, which agrees with our bijectivity result for simply switched coherently oriented systems.  
  {}
 
 {}
 \vspace{-0.3cm}
   \section{Summary and Outlook}   
    In this paper we have examined the properties of piecewise linear functions that are given in abs-normal form. Such a representation is always possible, but by no means unique. A key quantity is the switching depth $\nu$, which we conjecture to be reducible to the bound  $\bar \nu(n) = 2\, n-1$. Of particular importance is the case of $\nu=1$, where we call $F$ simply switched. If such a representation exists, it is shown here that openness and bijectivity coincide provided LIKQ or the slightly  weaker nondegeneracy condition of stable coherent orientation is satisfied.   
 
The Schur complement matrix $S = L - Z J^{-1} Y$, whose existence  depends on the nonsingularity of the smooth part $J$, plays a central role throughout. 
In particular it yields the complementary system $H(z) = [I- S \Sigma]z = \hat c$. This piecewise linear function $H(z)$ is simply switched and satisfies the LIKQ condition. 
Hence it is, according to Proposition 4.1, injective if and only if it is coherently oriented, which, in turn, is equivalent to the      
the signed real spectral radius of $S$ being less than 1. 
In principle this can be tested, though the  evaluation of the continuous function $\rho^s_0(S)$ is generally NP hard as shown in \cite{rump1997theorems}. Since injectivity of $H(z)$ implies injectivity of the underlying $F(x)$ the condition  $\rho^s_0(S) < 1 $ is also sufficient for injectivity of $F(x)$. However, we have as yet no practical criterion for $F(x)$ to be merely open other than the theoretical possibility of exhaustively checking all Jacobians of $F$.  Such combinatorial procedures have otherwise  been avoidable throughout, thanks to the representation of $F$ in abs-normal form.   

The key properties form the following chain of implications:
\begin{center}
 Absolute Contractivity $\;   {\bf \Longrightarrow}  \;$ Smooth Dominance $  \; {\bf \Longrightarrow} \;  $ Bijectvity of $H$
 $$\qquad  \rho(|S|) < 1 \qquad \qquad \qquad   \| D S D^{-1}\|_p <1 \qquad \qquad  \qquad   \rho_0^s(S) < 1 . $$
\end{center}
So far our Linear Independence Kink Qualification (LIKQ) has only been defined in the simply switched case and it is then equivalent to the familiar linear independence constraint qualification (LICQ). However, we believe there is a generalization to PL problems, where the kinks do not even locally consist of a set of 
intersecting hyperplanes, as is often envisioned. Instead, there is a hierarchy of kinks with the later ones being broken into affine pieces by the earlier ones. The algorithmic handling of this structure is still not entirely clear, even in the context of minimizing a scalar valued PL function.    

In order to constructively solve PL systems of equations 
one may apply full-step or piecewise Newton to either the original  problem $F(x)=0$ or the complementary version $H(z)= \hat c$. They are guaranteed to converge if
 $S$ does not deviate too much from $L$, which ensures at least coherent orientation.  
 More specifically, we obtain finite convergence of generalized  Newton on $H(z)=\hat c$ when 
 $\|S\|_p < 1/3$ or $\rho(|S|) < 1/2$. The second bound is quite sharp in that divergence can occur as soon as $\rho(|S|) \geq 1/2+1/2^n$, as demonstrated in 
 Proposition 7.5. 
    
 Apart from these four variants one may also apply damped versions or the fixed point iteration \( z^+ = \hat c + S\lvert z\rvert \), provided one has smooth dominance, i.e., $\|S\|_p < 1$  for some $p\geq 1$, which is stronger than coherent orientation of $H$ and thus injectivity of $F, H$.  Piecewise smooth problems can be solved by successive piecewise linearization, yielding at least locally quadratic convergence. In this context coherent orientation of the piecewise linear model near the current outer iterate should be sufficient. 
 
Abbreviating $\hat \rho = \|J^{-1}Y\|_p\|Z\|_p$ we may compile the table of solvers listed in Table \ref{methodstable}. The effort column shows, which linear systems need to be solved, usually once per iteration. In the signed Gaussian elimination the equivalent of just one single solve is needed.   
{}
\begin{table}[H]
\hspace*{-0.20cm}
\begin{tabular}{| l | l |c| c| l } 
\hline
Method & Convergence condition & Rate & Effort \\
\hline
Generalized Newton on OPL & $ 2 \, \hat \rho < (1\! - \! \|L\|_p \! - \! \hat \rho/2)^2 $& finite & $I \! - \! S\Sigma, J$ \\ \hline 
Generalized Newton on CPL &  $\|S\|_p < 1/3$ & finite & $I\! -\! S\Sigma$ \\ \hline 
Signed Gauss on CPL &  $ \rho(|S|) < 1/2$ & finite & $I\! -\! S\Sigma$  once \\ \hline 
Block Seidel on CPL & $\|S-L\|_p + \|L\|_p < 1$ & linear & $I \! - \! L\Sigma, J$  \\ \hline
Modulus Iteration on CPL & $\|S\|_p < 1$ & linear & J \\ \hline 
Piecewise Newton on OPL & coherent orient. of F & finite & $I\! - \!  S \Sigma, J$ \\ \hline 
Piecewise Newton on CPL & $\rho_0^s(S) < 1$ & finite & $I \! - \! S \Sigma$ \\ \hline
\end{tabular}
\caption{Solvers for PL systems of equations in original abs-normal or complementary form.}  
\label{methodstable}
\end{table} 

Another theoretical possibility is piecewise Newton on the combined system in terms of $x$ and $z$. A more promising approach would appear to be the combination of  the fixed point iterations with Newton variants, which should yield finite convergence if one can get into the vicinity of a root.    
Without coherent orientation the fibres $\{F(x) = \lambda F(x_0) : \lambda > 0 \}$ and also $\{H(z) -\hat c = \lambda (H(z_0)-\hat c) : \lambda >0  \}$ may contain turning points, which could be followed by some version of Branin's method \cite{branin1972widely} originally defined by
$$     \dot x \; = \; \pm \mathbf{adj}(F^\prime(x)) F(x) \quad \mbox{with} \quad \det(F^\prime(x)) I \; = \;  F^\prime(x)\,  \mathbf{adj}(F^\prime(x))  \; .$$
In the general smooth case such trajectories may converge to roots, cycle or run off to infinity. Possibly the inherent finiteness of PL functions makes it possible to avoid some of these calamities. Other globalized searches remain to be investigated. 
Since any Lipschitzian vector function may be approximated on compact domains by PL functions, there can be no magic solver for the general case.  Numerical experiments with the various methods considered here are currently under way.

\section*{Acknowledgements}
The proof of Proposition 4.2 and Lemma 4.3 was thankfully provided by our colleague Dorothee Sch\"uth of Humboldt University. 
The authors are  also indebted to Daniel Kressner, who pointed out the connection between the coherence condition $\det(I- \Sigma \, S)>0 $ and the sign real spectral radius
$\rho^s_0(S)$ of the Schur complement $S$ being less than $1$.  They are also grateful to Torsten Bosse, who contributed many insights into the piecewise linearization approach and  greatly helped with the composition of this article. Finally, the paper benefited greatly from the corrections and suggestions of the two anonymous referees. 

 \bibliographystyle{alpha}
\bibliography{autodiff,../junkfinal/ad,../final/secant}

\begin{thebibliography}{GMMS12}

\bibitem[BC08]{brugnano2008iterative}
Luigi Brugnano and Vincenzo Casulli.
\newblock Iterative solution of piecewise linear systems.
\newblock {\em SIAM Journal on Scientific Computing}, 30(1):463--472, 2008.

\bibitem[Bra72]{branin1972widely}
Franklin~H Branin.
\newblock Widely convergent method for finding multiple solutions of
  simultaneous nonlinear equations.
\newblock {\em IBM Journal of Research and Development}, 16(5):504--522, 1972.

\bibitem[CPS92]{0757.90078}
Richard~W. Cottle, Jong-Shi Pang, and Richard~E. Stone.
\newblock {\em {The linear complementarity problem.}}
\newblock {Computer Science and Scientific Computing. Boston, MA etc.: Academic
  Press, Inc.. xxiv, 762 p. }, 1992.

\bibitem[DER86]{duff1986direct}
Iain~S Duff, Albert~Maurice Erisman, and John~Ker Reid.
\newblock {\em Direct methods for sparse matrices}.
\newblock Clarendon Press Oxford, 1986.

\bibitem[FP03]{pang1992linear}
Francisco Facchinei and Jong-Shi Pang.
\newblock {\em Finite-dimensional variational inequalities and complementarity
  problems}, volume~1.
\newblock Springer, 2003.

\bibitem[GMMS12]{martin}
Björn Geißler, Alexander Martin, Antonio Morsi, and Lars Schewe.
\newblock Using piecewise linear functions for solving minlps.
\newblock In Jon Lee and Sven Leyffer, editors, {\em Mixed Integer Nonlinear
  Programming}, volume 154 of {\em The IMA Volumes in Mathematics and its
  Applications}, pages 287--314. Springer New York, 2012.

\bibitem[Gri13]{griewank2013stable}
Andreas Griewank.
\newblock On stable piecewise linearization and generalized algorithmic
  differentiation.
\newblock {\em Optimization Methods and Software}, 28(6):1139--1178, 2013.

\bibitem[GS85]{golubitzky}
M.~Golubitsky and D.G. Schaeffer.
\newblock {\em {Singularities and Groups in Bifurcation Theory, volume 1}}.
\newblock {Springer, New York.}, 1985.

\bibitem[GW08]{griewank2008evaluating}
Andreas Griewank and Andrea Walther.
\newblock {\em Evaluating derivatives: principles and techniques of algorithmic
  differentiation}.
\newblock Siam, 2008.

\bibitem[HLT12]{hadj}
A.~Hadjidimos, M.~Lapidakis, and M.~Tzoumas.
\newblock On iterative solution for linear complementarity problem with an
  $h_{+}$-matrix.
\newblock {\em SIAM Journal on Matrix Analysis and Applications},
  33(1):97--110, 2012.

\bibitem[KB12]{springerlink}
Kamil~A. Khan and Paul~I. Barton.
\newblock Evaluating an element of the clarke generalized jacobian of a
  piecewise differentiable function.
\newblock In {\em Recent Advances in Algorithmic Differentiation}, volume~87 of
  {\em Lecture Notes in Computational Science and Engineering}, pages 115--125.
  Springer Berlin Heidelberg, 2012.

\bibitem[KB13]{Khan:2013:EEC}
Kamil~A. Khan and Paul~I. Barton.
\newblock Evaluating an element of the {Clarke} generalized {Jacobian} of a
  composite piecewise differentiable function.
\newblock {\em ACM Transactions on Mathematical Software (TOMS)},
  39(4):23:1--23:28, July 2013.

\bibitem[KL92]{leenaerts}
Tom~AM Kevenaar and Domine~MW Leenaerts.
\newblock A comparison of piecewise-linear model descriptions.
\newblock {\em Circuits and Systems I: Fundamental Theory and Applications,
  IEEE Transactions on}, 39(12):996--1004, 1992.

\bibitem[MM06]{mama}
OL~Mangasarian and RR~Meyer.
\newblock Absolute value equations.
\newblock {\em Linear Algebra and Its Applications}, 419(2):359--367, 2006.

\bibitem[Nes05]{nesterov2005lexicographic}
Yurii Nesterov.
\newblock Lexicographic differentiation of nonsmooth functions.
\newblock {\em Mathematical programming}, 104(2-3):669--700, 2005.

\bibitem[Neu90]{neumaier1990interval}
A.~Neumaier.
\newblock {\em Interval Methods for Systems of Equations}.
\newblock Cambridge University Press, 1990.

\bibitem[PGar]{piontkowski}
Julia Piontkowski and Andreas Griewank.
\newblock Solution of complementarity problems via piecewise linearization.
\newblock {\em PAMM}, 2013 to appear.

\bibitem[QS93]{Qi:NonVN}
L.~Qi and J.~Sun.
\newblock A nonsmooth version of {N}ewton's method.
\newblock 58(3):353--368, 1993.

\bibitem[QSS00]{quarteroni2000numerical}
A.~Quarteroni, R.~Sacco, and F.~Saleri.
\newblock {\em Numerical}.
\newblock Texts in Applied Mathematics Series. Springer-Verlag GmbH, 2000.

\bibitem[Rad]{radons2014sign}
M.~Radons.
\newblock Direct solution of piecewise linear systems.
\newblock {\em In preparation}.

\bibitem[Rum97]{rump1997theorems}
Siegfried~M Rump.
\newblock Theorems of perron-frobenius type for matrices without sign
  restrictions.
\newblock {\em Linear Algebra and Its Applications}, 266:1--42, 1997.

\bibitem[Ruz04]{ruzicka2004}
M.~Ruzicka.
\newblock {\em Nichtlineare Funktionalanalysis. Eine Einführung}.
\newblock Springer, Berlin, 2004.

\bibitem[Sch12]{scholtes2012introduction}
Stefan Scholtes.
\newblock {\em Introduction to piecewise differentiable equations}.
\newblock Springer, 2012.

\bibitem[Ste56]{MR0086600}
Thomas~Edwin Stern.
\newblock {\em Piecewise-linear network theory}.
\newblock Tech. Rep. 315. Research Laboratory of Electronics, Massachusetts
  Institute of Technology, Cambridge, Mass., 1956.

\bibitem[vB81]{van1981piecewise}
Wilhelmus Maria~Gezinus van Bokhoven.
\newblock {\em Piecewise-linear modelling and analysis}.
\newblock Kluwer Technische Boeken, 1981.

\bibitem[VBL99]{van1999explicit}
Wim~MG Van~Bokhoven and Domine~MW Leenaerts.
\newblock Explicit formulas for the solutions of piecewise linear networks.
\newblock {\em Circuits and Systems I: Fundamental Theory and Applications,
  IEEE Transactions on}, 46(9):1110--1117, 1999.

\end{thebibliography}
\end{document}